\documentclass[12pt]{article}
\usepackage{setting0730}

\renewcommand{\ArticleHeaderText}{Zhengxu Jiang, B-coloring of \(K_{2,t}\)-free planar graphs}
\title{B-coloring of \(K_{2,t}\)-free planar graphs}
\author[1]{Zhengxu Jiang}
\affil[1]{School of Mathematical Sciences and LPMC, Nankai University, Tianjin 300071, China}
\date{}

\hypersetup{
  pdftitle={B-coloring of K2,t-free planar graphs},
  pdfauthor={Zhengxu Jiang},
  pdfkeywords={B-coloring, K2,t-free graph, planar graph, degenerate graph}
}

\newcommand{\qB}{q_B}

\newtheorem*{mainproblem}{Problem}
\newtheoremstyle{mainresult}%
  {6pt}{6pt}%
  {\normalfont}%
  {0pt}%
  {\bfseries}%
  {.}%
  {0.6em}%
  {}%
\theoremstyle{mainresult}
\newtheorem*{theorema}{Theorem A}
\newtheorem*{theoremb}{Theorem B}

\begin{document}
\maketitle

\begin{abstract}
A B-coloring of a graph \(G\) is a proper edge-coloring in which every \(4\)-cycle receives four distinct colors; let \(\qB(G)\) be the minimum number of colors in such a coloring. Every graph of maximum degree \(\Delta\) is \(K_{2,\Delta+1}\)-free; hence the known \(2\Delta\) bound for planar graphs with \(\Delta\ge38\) (Kong et al., 2026) motivates our study of \(K_{2,t}\)-free planar graphs, where \(t\ge2\) is an integer. We prove \(\qB(G)=\Delta(G)\) when \(t=2\) and \(\Delta(G)\ge7\), or when \(t\ge3\) and \(\Delta(G)\ge14(t-1)\). For \(t\ge35\), the bound \(\qB(G)\le\Delta(G)+t-1\) holds regardless of \(\Delta(G)\); for every \(t\ge2\), it also holds when \(\Delta(G)>428\). Finally, for every integer \(k\ge1\), every \(k\)-degenerate \(K_{2,t}\)-free graph satisfies \(\qB(G)\le\Delta(G)+(k-1)\min\{t-1,\Delta(G)\}\), with equality for \(K_{k,t-1}\) when \(k\ge2\) and \(t-1\ge k\).

\medskip
\noindent\textbf{Keywords.} B-coloring; \(K_{2,t}\)-free graph; planar graph; degenerate graph.
\end{abstract}

\section{Introduction}

All graphs in this paper are finite, simple, and undirected. An edge-coloring is \emph{proper} if incident edges receive distinct colors, and a set of edges is \emph{rainbow} if its edges have pairwise distinct colors. A \emph{B-coloring} of a graph \(G\) is a proper edge-coloring in which every \(4\)-cycle is rainbow; the cycles need not be induced. The \emph{B-coloring number} \(\qB(G)\) is the minimum number of colors in a B-coloring of \(G\). Necessarily, \(\qB(G)\ge\Delta(G)\). Let \(F(G)\) be the graph with vertex set \(E(G)\), in which two edges are adjacent when they are opposite edges of a \(4\)-cycle, and let \(L^+(G)=L(G)\cup F(G)\)~\cite{GyarfasMartinRuszinkoSarkozy2024}. Thus a B-coloring of \(G\) is exactly a proper vertex-coloring of \(L^+(G)\), and \(\qB(G)=\chi(L^+(G))\).

Gy\'arf\'as and S\'ark\"ozy introduced B-coloring partly through its connection with the famous \((7,4)\)-problem of Brown, Erd\H{o}s, and S\'os, which asks whether every triple system on \(n\) points containing no four triples on seven points has \(o(n^2)\) triples~\cite{GyarfasSarkozy2023}. For a planar graph \(G\) with maximum degree \(\Delta\), edge-coloring requires only \(\Delta\) colors when \(\Delta\ge7\), in which case \(G\) is of class~1~\cite{SandersZhao2001,Zhang2000}. With the additional requirement that every \(4\)-cycle be rainbow, Gy\'arf\'as, Martin, Ruszink\'o, and S\'ark\"ozy proved \(\qB(G)\le2\Delta+8\)~\cite{GyarfasMartinRuszinkoSarkozy2024}, which Kong, Wang, and Zheng improved to \(2\Delta+6\), and further to \(2\Delta+4\) when \(\Delta\ge12\) and \(2\Delta\) when \(\Delta\ge38\)~\cite{KongWangZheng2026}.

For an integer \(t\ge2\), a graph is \emph{\(K_{2,t}\)-free} if any two distinct vertices have at most \(t-1\) common neighbors. \(K_{2,t}\)-free graphs arise in extremal graph theory through the Zarankiewicz problem and the K\H{o}v\'ari--S\'os--Tur\'an theorem~\cite{KovariSosTuran1954}, and have also been studied in work on star edge coloring, which is closely related to B-coloring~\cite{TangYinHan2023}. Since every graph of maximum degree \(\Delta\) is \(K_{2,\Delta+1}\)-free, the known \(2\Delta\) bound suggests the more general inequality
\begin{equation}
\qB(G)\le\Delta(G)+t-1.
\label{eq:target-palette}
\end{equation}
Since the \(2\Delta\) bound is known for \(\Delta\ge38\), we ask whether a sufficiently large maximum degree also guarantees the proposed bound.

\begin{mainproblem}
For every integer \(t\ge2\), does there exist a constant \(M(t)\) such that every \(K_{2,t}\)-free planar graph \(G\) with maximum degree \(\Delta>M(t)\) satisfies \(\qB(G)\le\Delta+t-1\)? If so, can \(M(t)\) be bounded by an absolute constant independent of \(t\)?
\end{mainproblem}

Our first result determines the B-coloring number when the maximum degree is large relative to \(t\).

\begin{theorema}
Let \(t\ge2\) be an integer, and let \(G\) be a \(K_{2,t}\)-free planar graph with maximum degree \(\Delta\). If \(t=2\) and \(\Delta\ge7\), or if \(t\ge3\) and \(\Delta\ge14(t-1)\), then \(\qB(G)=\Delta\).
\end{theorema}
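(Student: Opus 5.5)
The lower bound \(\qB(G)\ge\Delta\) is immediate, since the edges at a vertex of maximum degree get distinct colors. The case \(t=2\) needs no new argument. A \(K_{2,2}\)-free graph contains no \(4\)-cycle, so every proper edge-coloring is already a B-coloring and \(L^+(G)=L(G)\). Planar graphs with \(\Delta\ge7\) are class~1 by Sanders--Zhao and Zhang~\cite{SandersZhao2001,Zhang2000}, which gives \(\qB(G)=\chi'(G)=\Delta\).

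For \(t\ge3\) I would fix \(\Delta\ge14(t-1)\) and take a counterexample \(G\) that is minimal in \(|E(G)|\). This is a \(K_{2,t}\)-free planar graph with \(\Delta(G)\le\Delta\) and no B-coloring from a fixed palette of \(\Delta\) colors. Every proper subgraph is again \(K_{2,t}\)-free and planar, so it has such a coloring. The basic tool is a count of the neighbours of an edge \(uv\) in \(L^+(G)\). It has \(d(u)+d(v)-2\) neighbours in \(L(G)\). Its neighbours in \(F(G)\) are the edges \(xy\) with \(x\in N(u)\setminus\{v\}\) and \(y\in N(x)\cap N(v)\setminus\{u\}\). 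Counting from the smaller-degree endpoint and using \(|N(x)\cap N(v)|\le t-1\), there are at most \((\min\{d(u),d(v)\}-1)(t-1)\) of them. Hence if \(d(u)=k\) is small, then
\[
\deg_{L^+(G)}(uv)\le d(v)+(k-1)t-1,
\]
so \(uv\) is reducible whenever \(d(v)\le\Delta-(k-1)t\). In that case we delete \(uv\), color \(G-uv\) by minimality, and give \(uv\) a free color. This already rules out \(1\)-vertices and any edge joining a small vertex to a vertex that is not nearly maximal. So in \(G\), every vertex of degree at most about \(13\) has all its neighbours of degree more than \(\Delta-(k-1)t\), i.e.\ \emph{big}.

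The second step uses the \(K_{2,t}\)-free hypothesis structurally. Two big vertices share at most \(t-1\) neighbours, so each small vertex, and in particular each \(2\)-vertex, lies between a pair of big vertices that it shares with at most \(t-2\) other small vertices. I would add reducible configurations of the type ``a big vertex \(v\) has many \(2\)- or \(3\)-neighbours.'' These would be handled by deleting all the light edges at \(v\), coloring the rest, and then recoloring the deleted edges simultaneously. The deleted edges form a star at \(v\), and each has few constraints at its other end. So a Hall-type (system of distinct representatives) argument should assign them distinct admissible colors, provided the number of forbidden colors per edge is at most about \(13(t-1)\) less than \(\Delta\). This is where the constant \(14(t-1)\) should enter.

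Finally, I would run a discharging argument on the plane embedding, with charge \(d(v)-4\) on vertices and \(\ell(f)-4\) on faces, for total \(-8\). Big vertices send charge to adjacent small vertices and to incident \(3\)-faces. The configurations excluded in the second step guarantee that a big vertex of degree at least \(14(t-1)-O(t)\) has enough surplus, since it has only a bounded fraction of light neighbours. Every element would then end with nonnegative charge, a contradiction. I expect the main obstacle to be the simultaneous recoloring step for a big vertex with many \(2\)-neighbours whose other ends are also big. There the naive count fails, because each such edge sees nearly \(\Delta\) colors at the far end. I would first try to exploit that opposite-edge conflicts through a common pair of big vertices are limited to \(t-1\) paths. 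If that is not enough, I would swap colors along \(2\)-colored alternating paths, checking that such a swap cannot create a non-rainbow \(4\)-cycle.
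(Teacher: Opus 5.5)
Your \(t=2\) case is correct and is the paper's argument. For \(t\ge3\), however, the first reduction already fails. Deleting an edge is not a valid reduction for B-colorings once \(d(u)\ge2\). If \(uvab\) is a \(4\)-cycle of \(G\), then \(va\) and \(ub\) are opposite in \(G\). They are adjacent in \(L^+(G-uv)\) only if also \(au,bv\in E(G)\). So the B-coloring of \(G-uv\) supplied by minimality may give them the same color, and then no color on \(uv\) makes \(uvab\) rainbow.

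For a vertex \(v\), \(L^+(G-v)\) is exactly the subgraph of \(L^+(G)\) induced by \(E(G-v)\), since a \(4\)-cycle through two remaining edges avoids \(v\). This is why the paper only deletes vertices and restores a whole star (Lemma~\ref{lem:three-source}). For an edge, \(L^+(G-uv)\) can lose adjacencies. If you delete the small vertex \(u\) instead, you must also color its edges to big neighbours. Such an edge \(ux_i\) can have \(d(x_i)-1=\Delta-1\) forbidden colors at \(x_i\), plus up to \(\sum_{j\ne i}\min\{t-2,d(x_j)-1\}\) from opposite edges (Lemma~\ref{lem:packet-load}). That exhausts a palette of exactly \(\Delta\) colors. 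So the claim that small vertices have only big neighbours is not established, and the later steps rest on it.

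This is also the obstacle you flag in the second step, and it is not a technicality. With exactly \(\Delta\) colors, an edge whose far endpoint has degree near \(\Delta\), with the other edges there already colored, has essentially no free color. Hence no counting or SDR argument at a big vertex can make your light configurations reducible. You also give no argument that Kempe swaps keep every \(4\)-cycle rainbow, and nothing in general stops a swapped edge from taking the color of an edge opposite to it. The discharging step and the appearance of \(14(t-1)\) therefore remain unproved.

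The missing idea is to drop the minimal counterexample and color along an order in which far endpoints are still nearly uncolored. The paper fixes a \((5,8)\)-order (Theorem~\ref{thm:planar-two-step-order}) and colors the forward stars \(S_x=\{xu:u>x\}\) in increasing order of \(x\). When \(S_x\) is colored, at most \(5\) edges at \(x\) are colored. The colored edges at a far endpoint \(u>x\) are exactly the edges \(uw\) with \(w<x\), and every such \(w\) lies in \(P_2(x)\), which has at most \(8\) elements. By Lemma~\ref{lem:two-star}, all conflicts form at most \(8\) complete bipartite pieces with sides of size at most \(t-1\), plus a part of maximum degree at most \(5(t-2)\). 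The Hall-type Lemma~\ref{lem:structured-hall} then colors the whole block once \(\Delta\ge(5+8+1)(t-1)\), which is where \(14(t-1)\) comes from.
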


Our second result shows that \(M(t)=428\) suffices for every \(t\ge2\).

\begin{theoremb}
Let \(t\ge2\) be an integer. Every \(K_{2,t}\)-free planar graph \(G\) with maximum degree \(\Delta>428\) satisfies \(\qB(G)\le\Delta+t-1\).
\end{theoremb}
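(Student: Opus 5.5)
Theorem B will follow by combining Theorem A with a separate argument for large \(t\). If \(t\ge35\), the abstract's claim that \(\qB(G)\le\Delta+t-1\) holds regardless of \(\Delta\) would finish. For \(t\le 34\), \(14(t-1)\le 462\); but if \(t\le 31\) then \(14(t-1)\le 420<429\le\Delta\), so Theorem A gives \(\qB(G)=\Delta\le\Delta+t-1\). This leaves \(32\le t\le 34\) with \(429\le\Delta<14(t-1)\). I would handle the whole range \(t\ge 32\) (or more naturally, all \(t\) with \(t-1\ge\Delta/14\), roughly) by a single discharging argument with palette size \(\Delta+t-1\), and that is the core of the plan.

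The main argument is a minimal counterexample with \(k=\Delta+t-1\) colors. Take \(G\) a \(K_{2,t}\)-free planar graph with \(\Delta(G)\le\Delta\), not B-colorable with \(k\) colors, and with \(|E(G)|\) minimum. For an edge \(uv\), the forbidden colors are those on edges incident to \(u\) or \(v\) (at most \(d(u)+d(v)-2\)), plus those on edges opposite \(uv\) in a \(4\)-cycle. An opposite edge \(xy\) has \(x\in N(u)\), \(y\in N(v)\), and \(xy\in E\). Counting via \(K_{2,t}\)-freeness, for each \(x\in N(u)\setminus\{v\}\), the vertices \(y\) adjacent to both \(x\) and \(v\) number at most \(t-1\). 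This gives roughly \((d(u)-1)(t-1)\) forbidden colors, and symmetrically \((d(v)-1)(t-1)\). So the key reducible configuration is an edge \(uv\) with \(d(u)\) small, using the bound \(d(v)-1+(d(u)-1)+\min\{(d(u)-1)(t-1),\dots\}\). Concretely, an edge \(uv\) with \(d(u)=1\) is reducible (at most \(\Delta-1<k\) forbidden colors). For \(d(u)=2\) with \(N(u)=\{v,w\}\), the forbidden colors are at most \((\Delta-1)+1+(t-1)=\Delta+t-1\), which is exactly one too many. I would therefore refine using the color of \(uw\): the opposite edges \(wy\) with \(y\in N(v)\) are incident to \(w\), so their colors differ from that of \(uw\), and one of them coincides in count with the \(uw\) term. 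More carefully, the at most \(t-1\) edges \(wy\) together with \(uw\) are all incident to \(w\). After uncoloring both \(uv\) and \(uw\), I would recolor them greedily and check the counts; I expect \(2\)-vertices adjacent to a \(\Delta\)-vertex, and light edges \(uv\) with \(d(u)+d(v)\) small in the style of the \(2\Delta\) proof of Kong et al., to be reducible.

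Next comes discharging. With the planar charge \(d(v)-4\) on vertices and \(d(f)-4\) on faces, the total charge is \(-8\). Using the reducible configurations (no \(1\)-vertices; \(2\)-vertices only adjacent to high-degree vertices in a controlled way; bounds on the number of \(3\)-faces at light vertices, coming from \(K_{2,t}\)-freeness limiting the number of triangular faces around an edge), I would send charge from vertices of degree greater than about \(400\) to their light neighbours and incident triangles, and show that every final charge is nonnegative. The threshold \(428\) would emerge from balancing the charge a big vertex can give (it has \(d-4\) and at most \(d\) recipients) against the demand of the light configurations.

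The main obstacle is the reducibility of \(2\)- and \(3\)-vertices when \(t\) is large relative to \(\Delta/14\): the opposite-edge count \((d(u)-1)(t-1)\) then becomes comparable to \(\Delta\), so naive counting fails for \(3\)-vertices. I would first try a Kempe-free recoloring that exploits the fact that the opposite edges at a \(2\)-vertex share the endpoint \(w\), and bound the overlap between the opposite-edge colors and the colors at \(v\) via the \(\min\{t-1,\Delta\}\) term that the degenerate-graph bound of the abstract suggests.
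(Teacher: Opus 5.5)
\textbf{Genuine gap.} You correctly isolate the window \(32\le t\le34\), \(429\le\Delta<14(t-1)\), where Theorem~A does not apply. There, however, nothing is actually proved. The discharging plan specifies no rules and verifies no reducible configuration. You also observe yourself that naive counting fails for \(3\)-vertices: the opposite edges through its two other neighbors can forbid up to \(2(t-2)\) colors, on top of the \(\Delta-1\) colors at a big neighbor. That can exceed the roughly \(t\) colors that remain.

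Incidentally, your \(2\)-vertex count is off by one. The vertex \(u\) is itself a common neighbor of \(v\) and \(w\), so at most \(t-2\) edges are opposite \(uv\), and a \(2\)-vertex is reducible outright. Your expectation that \(428\) comes out of a charge balance is also misplaced. In the paper, \(428=13\cdot33-1\), where \(33=t-1\) for \(t=34\), the largest \(t\) not covered by the \(t\ge35\) argument.

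\textbf{Missing idea.} Theorem~B only asks for \(\qB(G)\le\Delta+t-1\), not the equality of Theorem~A. The star-block argument behind Theorem~A converts spare colors directly into a lower degree threshold. In Lemma~\ref{lem:two-step-coloring}, the palette size \(\Delta+\rho\) enters only through \(N=\Delta+\rho-d\) in the Hall conditions \(N\ge2B\) and \(N\ge B+(q+1)h\) of Lemma~\ref{lem:structured-hall}. With the \((5,8)\)-order this yields Lemma~\ref{lem:planar-tradeoff}: \(\qB(G)\le\Delta+\rho\) once \(\Delta\ge14(t-1)-\rho\). Choosing \(\rho=t-1\) gives the threshold \(13(t-1)\le13\cdot33=429\le\Delta\) for every \(3\le t\le34\). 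So your troublesome window disappears without any discharging, and \(t=2\) is Vizing's theorem (or Theorem~A).

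For \(t\ge35\), invoking the abstract matches how the paper itself derives Theorem~B from Theorem~\ref{thm:fixed-t-thresholds}. Be aware, though, that this black box is Lemma~\ref{lem:large-t-fixed-palette}, a minimal-counterexample argument via the stars-and-bunches structure, and that is where the real work of Theorem~B lies.
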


Theorem~A gives the exact value under a lower bound on \(\Delta\) that depends on \(t\), whereas Theorem~B proves \(\qB(G)\le\Delta+t-1\) for every \(t\) once \(\Delta>428\). Sharper bounds for individual values of \(t\) are stated in \cref{thm:fixed-t-thresholds}. The proof of Theorem~A shows how the assumed lower bound on \(\Delta\) controls the number of colors. For \(t\ge35\), a structural theorem for plane graphs removes that lower bound; together, the two arguments yield the absolute bound \(428\). The vertex-extension argument also yields the degenerate-graph results in the final section.

\Cref{sec:theorem-a} proves Theorem~A by coloring the edges in an order determined by the vertices; it also gives the bound used for \(3\le t\le34\) in \cref{sec:theorem-b}. That section treats \(t\ge35\) and then proves Theorem~B. \Cref{sec:degenerate} proves the \(k\)-degenerate bound.

\section{Proof of Theorem~A}
\label{sec:theorem-a}

We first prove Theorem~A by coloring the edges in an order determined by the vertices. The same estimates will be used for \(3\le t\le34\) in the proof of Theorem~B.

\subsection{Vertex extension and two-step orders}

We use the following vertex-extension lemma. Let \(v\) have neighbors \(x_1,\ldots,x_r\), and suppose that \(G-v\) has a B-coloring \(\varphi\) from a fixed color set \(C\). For \(1\le i\le r\), let \(L_i\) consist of the colors in \(C\) that appear neither on an edge of \(G-v\) incident with \(x_i\) nor on an edge \(x_jy\), where \(j\ne i\) and \(y\in N_G(x_i)\cap N_G(x_j)\setminus\{v\}\). Thus \(L_i\) is the set of colors that may be assigned to \(vx_i\) without repeating a color at \(x_i\) or on a new \(4\)-cycle.

\begin{lemma}[Vertex extension]
\label{lem:three-source}
If one can choose pairwise distinct colors \(c_i\in L_i\) for \(1\le i\le r\), then setting \(\varphi(vx_i)=c_i\) extends \(\varphi\) to a B-coloring of \(G\).
\end{lemma}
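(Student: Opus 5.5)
The plan is a direct verification. The extended coloring is B-coloring if and only if it is proper and every \(4\)-cycle of \(G\) is rainbow. Any violation among edges of \(G-v\) is already excluded by \(\varphi\). So we only need to examine pairs of edges that are incident with each other, or opposite on a \(4\)-cycle, with at least one of the two containing \(v\). We also use that every edge of \(G\) at \(v\) has the form \(vx_i\).

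\emph{Properness.} Two new edges \(vx_i, vx_j\) with \(i\ne j\) share \(v\), and they get distinct colors because the \(c_i\) are pairwise distinct. A new edge \(vx_i\) and an old edge sharing an endpoint must share \(x_i\), since old edges avoid \(v\). The old edge is then an edge of \(G-v\) incident with \(x_i\), whose colors are excluded from \(L_i\) by definition, so \(c_i\) differs from it.

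\emph{4-cycles through \(v\).} Any \(4\)-cycle containing \(v\) has the form \(v x_i y x_j v\) with \(i\ne j\) and \(y\ne v\), where \(y\in N_G(x_i)\cap N_G(x_j)\). Its edges are \(vx_i, x_iy, yx_j, x_jv\).
\begin{itemize}
\item Adjacent pairs on this cycle are handled by properness, and \(vx_i, vx_j\) differ as above.
\item The two opposite pairs are \(\{vx_i, yx_j\}\) and \(\{vx_j, x_iy\}\). For the first, \(x_jy\) is an edge with \(j\ne i\) and \(y\in N_G(x_i)\cap N_G(x_j)\setminus\{v\}\), so its color is excluded from \(L_i\), giving \(c_i\ne\varphi(x_jy)\).
\item The second pair is the same situation with \(i\) and \(j\) swapped, so \(c_j\ne\varphi(x_iy)\).
\item The old edges \(x_iy\) and \(yx_j\) share \(y\), so they differ because \(\varphi\) is proper.
\end{itemize}
Hence every such cycle is rainbow.

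There is no genuine obstacle here. The only care needed is to enumerate every pair of edges that must differ, and to note that a \(4\)-cycle through \(v\) necessarily uses exactly two edges at \(v\), because \(v\) has degree two in the cycle.
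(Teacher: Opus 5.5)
Your proof is correct and follows essentially the same direct verification as the paper. Both arguments check properness at \(v\) and at each \(x_i\), inherit rainbowness of old \(4\)-cycles from \(\varphi\), and handle each new cycle \(vx_iyx_jv\) via \(c_i\ne\varphi(x_jy)\), \(c_j\ne\varphi(x_iy)\), properness at \(y\), and \(c_i\ne c_j\); yours differs only in listing the six pairs on the cycle explicitly.
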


\begin{proof}
The extension is proper at every \(x_i\), and the new edges receive pairwise distinct colors at \(v\). Every old \(4\)-cycle remains rainbow. A new \(4\)-cycle has the form \(vx_iyx_jv\), as shown in \cref{fig:vertex-extension}. By the definition of the lists, \(c_i\ne\varphi(x_jy)\) and \(c_j\ne\varphi(x_iy)\). The two old edges have different colors because they meet at \(y\), and \(c_i\ne c_j\). Hence the new cycle is rainbow.
\end{proof}

\begin{figure}[htbp]
\centering
\begin{tikzpicture}[x=1.18cm,y=1.0cm]
\draw[rounded corners=3pt,draw=black!40,dashed,line width=.5pt]
  (-3.75,-1.38) rectangle (3.75,1.05);
\node[graphlabel,anchor=north east,fill=white] at (3.68,1.0) {\(G-v\)};

\node[graphvertex,label={[graphlabel]above:\(v\)}] (v) at (0,2.25) {};
\node[graphvertex,label={[graphlabel]left:\(x_1\)}] (x1) at (-3.05,0.45) {};
\node[graphvertex,label={[graphlabel]below left,xshift=-1pt:\(x_i\)}] (xi) at (-1.25,0.45) {};
\node[graphvertex,label={[graphlabel]below right,xshift=1pt:\(x_j\)}] (xj) at (1.25,0.45) {};
\node[graphvertex,label={[graphlabel]right:\(x_r\)}] (xr) at (3.05,0.45) {};
\node[ellipsislabel] at (-2.15,0.48) {\(\cdots\)};
\node[ellipsislabel] at (2.15,0.48) {\(\cdots\)};
\node[graphvertex,label={[graphlabel]below:\(y\)}] (y) at (0,-0.9) {};
\node[graphvertex] (p1) at (-3.25,-0.9) {};
\node[graphvertex] (pr) at (3.25,-0.9) {};

\draw[draw=StructureRed!55,line width=.8pt] (v)--node[graphlabel,sloped,above,font=\scriptsize,fill=white]{\(vx_1\)} (x1);
\draw[focusedge] (v)--node[graphlabel,sloped,above,font=\scriptsize,fill=white]{\(vx_i:c_i\)} (xi);
\draw[focusedge] (v)--node[graphlabel,sloped,above,font=\scriptsize,fill=white]{\(vx_j:c_j\)} (xj);
\draw[draw=StructureRed!55,line width=.8pt] (v)--node[graphlabel,sloped,above,font=\scriptsize,fill=white]{\(vx_r\)} (xr);
\draw[draw=StructureBlue,line width=1.0pt] (xi)--node[graphlabel,sloped,below,font=\scriptsize,fill=white]{\(x_iy:\varphi(x_iy)\)} (y);
\draw[secondedge] (y)--node[graphlabel,sloped,below,font=\scriptsize,fill=white]{\(x_jy:\varphi(x_jy)\)} (xj);
\draw[graphedge] (x1)--(p1);
\draw[graphedge] (xr)--(pr);
\end{tikzpicture}
\caption{Restoring the star at \(v\). The dashed box represents \(G-v\), and the red edges are the newly colored edges \(vx_1,\ldots,vx_r\). The highlighted cycle \(vx_iyx_jv\) is one of the new \(4\)-cycles: the choice \(c_i\) must avoid \(\varphi(x_jy)\), and \(c_j\) must avoid \(\varphi(x_iy)\).}
\label{fig:vertex-extension}
\end{figure}

\Needspace{9\baselineskip}
\begin{lemma}[Forbidden colors at a deleted vertex]
\label{lem:packet-load}
Let \(t\ge2\) be an integer, and let \(G\) be \(K_{2,t}\)-free. For \(1\le i\le r\), the number of colors forbidden for \(vx_i\) is at most
\begin{equation}
d_G(x_i)-1+
\sum_{j\ne i}\min\{t-2,d_G(x_j)-1\}.
\label{eq:packet-compressed}
\end{equation}
In particular, at most \(d_G(x_i)-1+(r-1)(t-2)\) colors are forbidden.
\end{lemma}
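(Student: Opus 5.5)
The plan is to count the two sources of forbidden colors in the definition of \(L_i\) separately and bound each one.

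\textbf{Colors at \(x_i\).} In \(G-v\) the vertex \(x_i\) has degree \(d_G(x_i)-1\), because the edge \(vx_i\) has been removed. These edges therefore contribute at most \(d_G(x_i)-1\) forbidden colors.

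\textbf{Colors from \(4\)-cycles through \(x_j\).} Fix \(j\ne i\) and let \(Y_j=N_G(x_i)\cap N_G(x_j)\setminus\{v\}\). The forbidden colors coming from \(x_j\) are the colors \(\varphi(x_jy)\) with \(y\in Y_j\), so there are at most \(|Y_j|\) of them. I bound \(|Y_j|\) in two ways.

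\begin{itemize}
\item The vertices \(x_i\) and \(x_j\) are distinct and \(v\) is a common neighbor of both. Since \(G\) is \(K_{2,t}\)-free, they have at most \(t-1\) common neighbors, so \(|Y_j|\le t-2\).
\item Each \(y\in Y_j\) is a neighbor of \(x_j\) other than \(v\), so \(|Y_j|\le d_G(x_j)-1\).
\end{itemize}

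Hence \(x_j\) contributes at most \(\min\{t-2,d_G(x_j)-1\}\) forbidden colors.

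\textbf{Combining.} Summing over \(j\ne i\) and adding the contribution at \(x_i\) gives the bound \eqref{eq:packet-compressed}. This uses only a union bound, so colors counted more than once only help. Replacing each minimum by \(t-2\) gives the stated particular case \(d_G(x_i)-1+(r-1)(t-2)\).

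There is no real obstacle here. The only point needing care is to exclude \(v\) from the common neighborhood before applying \(K_{2,t}\)-freeness; this is exactly what turns \(t-1\) into \(t-2\).
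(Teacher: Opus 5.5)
Your proposal is correct and follows the paper's proof essentially step for step. You bound the colors at \(x_i\) by \(d_G(x_i)-1\), and you bound the opposite edges \(x_jy\) for each \(j\ne i\) by both \(t-2\) (since \(v\) is already a common neighbor) and \(d_G(x_j)-1\); then you sum.
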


\begin{proof}
Properness excludes at most \(d_G(x_i)-1\) colors. For a fixed \(j\ne i\), an old edge opposite \(vx_i\) has the form \(x_jy\), where \(y\in N_G(x_i)\cap N_G(x_j)\setminus\{v\}\). Since \(v\) is already a common neighbor of \(x_i\) and \(x_j\), there are at most \(t-2\) such edges. They are all incident with \(x_j\) in \(G-v\), so they use at most \(d_G(x_j)-1\) colors. Summing the smaller of these bounds over \(j\ne i\) proves \eqref{eq:packet-compressed}; replacing each summand by \(t-2\) gives the final estimate.
\end{proof}

Fix a linear order \(<\) of \(V(G)\). For \(x\in V(G)\), let \(P_1(x)=\{w<x:xw\in E(G)\}\), and let \(P_2(x)\) consist of \(P_1(x)\) together with the vertices \(w<x\) for which \(xu,uw\in E(G)\) for some \(u>x\). For nonnegative integers \(p,q\), the order \(<\) is a \emph{\((p,q)\)-order} if \(|P_1(x)|\le p\) and \(|P_2(x)|\le q\) for every \(x\in V(G)\). The example in \cref{fig:two-step-predecessors} illustrates the difference between the two predecessor sets.

\begin{figure}[htbp]
\centering
\begin{minipage}[c]{0.46\textwidth}
\centering
\small
\begin{tabular}{@{}lll@{}}
\toprule
Role & Vertices & Positions \\
\midrule
\(P_1(x)\) & \(a,d\) & \(1,4\) \\
\(P_2(x)\setminus P_1(x)\) & \(b,c\) & \(2,3\) \\
\(x\) & \(x\) & \(5\) \\
later vertices & \(u,v\) & \(6,7\) \\
\bottomrule
\end{tabular}
\end{minipage}\hfill
\begin{minipage}[c]{0.50\textwidth}
\centering
\begin{tikzpicture}[x=.75cm,y=.9cm]
\node[graphvertex,label={[graphlabel]above:\(x\)}] (x) at (0,1.35) {};
\node[graphvertex,label={[graphlabel]above left:\(a\)}] (a) at (-2.35,2.35) {};
\node[graphvertex,label={[graphlabel]below left:\(d\)}] (d) at (-2.35,0.35) {};
\node[graphvertex,label={[graphlabel]above:\(u\)}] (u) at (1.75,2.35) {};
\node[graphvertex,label={[graphlabel]below:\(v\)}] (vv) at (1.75,0.35) {};
\node[graphvertex,label={[graphlabel]above:\(b\)}] (b) at (3.35,2.35) {};
\node[graphvertex,label={[graphlabel]below:\(c\)}] (c) at (3.35,0.35) {};

\draw[focusedge] (x)--node[graphlabel,sloped,above,font=\scriptsize,fill=white]{\(xa\)} (a);
\draw[focusedge] (x)--node[graphlabel,sloped,below,font=\scriptsize,fill=white]{\(xd\)} (d);
\draw[draw=StructureBlue,line width=1.0pt] (x)--node[graphlabel,sloped,above,font=\scriptsize,fill=white]{\(xu\)} (u);
\draw[draw=StructureBlue,line width=1.0pt] (u)--node[graphlabel,above,font=\scriptsize,fill=white]{\(ub\)} (b);
\draw[draw=StructureBlue,line width=1.0pt] (x)--node[graphlabel,sloped,below,font=\scriptsize,fill=white]{\(xv\)} (vv);
\draw[draw=StructureBlue,line width=1.0pt] (vv)--node[graphlabel,below,font=\scriptsize,fill=white]{\(vc\)} (c);
\end{tikzpicture}
\end{minipage}
\caption{The order positions and the predecessor sets of \(x\). The table encodes the order \(a<b<c<d<x<u<v\) by the positions \(1,\ldots,7\). In the graph, red edges join \(x\) to \(P_1(x)=\{a,d\}\), while blue two-edge paths through the later vertices \(u,v\) reach \(P_2(x)\setminus P_1(x)=\{b,c\}\).}
\label{fig:two-step-predecessors}
\end{figure}

\begin{theorem}[Kierstead--Mohar--{\v S}pacapan--Yang--Zhu~\cite{KiersteadMoharSpacapanYangZhu2009}]
\label{thm:planar-two-step-order}
Every planar graph \(G\) has a \((5,8)\)-order, that is, a linear order \(<\) of \(V(G)\) such that each vertex \(x\) has at most \(5\) neighbors preceding it and at most \(8\) vertices \(w<x\) for which \(xw\in E(G)\) or \(xu,uw\in E(G)\) for some \(u>x\).
\end{theorem}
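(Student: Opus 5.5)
The plan is to build the order from the back: repeatedly choose a vertex of the current remaining graph, declare it larger than every vertex still remaining, and delete it. At a stage where the remaining vertex set is \(R\) and the deleted set is \(D=V(G)\setminus R\), a vertex \(x\in R\) chosen now has \(P_1(x)=N_G(x)\cap R\). It also has \(P_2(x)\setminus P_1(x)=\{w\in R\setminus\{x\}: xu,uw\in E(G)\text{ for some }u\in D\}\). So it suffices to show that at every stage some \(x\in R\) has at most \(5\) neighbors in \(R\) and at most \(8\) vertices of \(R\) reachable from it either directly or through one deleted vertex.

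To make this tractable, I would encode the deleted part by an auxiliary plane multigraph on \(R\). Each deleted vertex \(u\in D\) contributes "virtual" adjacencies among its neighbors in \(R\). To keep planarity, I would not simply add all pairs among \(N(u)\cap R\). Instead I would contract each connected piece of \(G[D]\) together with its incident edges into a face-like structure. Concretely, a deleted vertex \(u\) with \(k\) remaining neighbors is replaced by the cycle through those neighbors in the rotation order at \(u\), or it is kept as a "hub" vertex. Then \(P_2\)-reach corresponds to distance at most \(2\) through hubs in a plane bipartite-like structure.

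Two maintained invariants would make this workable:
\begin{enumerate}
\item each hub is adjacent only to remaining vertices;
\item the graph formed by \(R\), the hubs, and the original edges among \(R\) is planar.
\end{enumerate}
Planarity of this graph follows from planarity of \(G\) by contracting edges within \(D\). That contraction can merge hubs and only enlarges reach sets, so bounding the reach in the contracted graph suffices.

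The main step, and the expected obstacle, is the discharging argument. In any such plane graph \(H\) with vertex classes \(R\) (nonempty) and hubs, some \(x\in R\) must satisfy two conditions: \(d_{H[R]}(x)\le5\), and the number of \(R\)-vertices at distance at most \(2\) via hubs or adjacent directly is at most \(8\). I would assign charge \(d(v)-6\) to vertices and \(2|f|-6\) to faces, as in Euler-formula discharging, and assume a counterexample. Every \(R\)-vertex then either has at least \(6\) real neighbors or at least \(9\) vertices in its \(2\)-reach. A vertex with small real degree but large \(2\)-reach must be incident with hubs of large total degree. Those hubs would send charge back to it, roughly proportional to \(1-6/d(\text{hub})\) per incident hub edge. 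I would then verify case by case, on the real degree \(0,\dots,5\) of \(x\), that the final charge of each \(R\)-vertex is nonnegative, while hubs and faces stay nonnegative. This contradicts the Euler total \(-12\).

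I expect the delicate part to be the threshold \(8\) rather than some larger constant. Vertices of real degree \(5\) with a single small hub are tight cases. These may require removing redundant hubs first: a hub adjacent to at most one remaining vertex, or one dominated by a real edge, can be discarded. It may also require choosing, among candidate vertices, one minimizing the reach, so that reducible configurations can be excluded before discharging.
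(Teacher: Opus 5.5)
The paper does not prove this statement; it quotes it from Kierstead--Mohar--{\v S}pacapan--Yang--Zhu. Judged as a standalone proof, your reduction to ``some \(x\in R\) is admissible at every stage'' is fine. The gap is that the lemma you then plan to prove by discharging is false, so the plan cannot be completed.

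Already \(H=K_{1,10}\), with the center a hub and the leaves in \(R\), satisfies both of your invariants, yet every leaf reaches the other \(9\) leaves. Your contraction step produces such hubs. Suppose the deleted vertices \(u_1,\ldots,u_m\) form a path and each \(u_i\) is adjacent to the remaining vertices \(r_i,r_{i+1}\); deleting \(u_1,u_2,\ldots\) in turn is admissible. Contraction then yields one hub adjacent to \(r_1,\ldots,r_{m+1}\), although every true reach set has size at most \(2\).

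The contraction is also unnecessary. A witness path \(x-u-w\) for \(w\in P_2(x)\) has a single internal vertex, so edges inside \(D\) never matter and can simply be deleted. This leaves a subgraph of \(G\) in which \(D\) is independent. In it, every hub has at most \(5\) neighbors in \(R\): it had at most \(5\) when it was deleted, and \(R\) only shrinks. In that model hubs start with negative charge \(d-6\), so your rule of hubs sending charge back is unavailable. Also, replacing a hub by the cycle through its neighbors under-counts reach once it has \(4\) or \(5\) remaining neighbors.

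More seriously, even with the correct invariant, no statement of the form ``every such configuration has an admissible vertex'' holds, so no discharging can prove it. Let \(G\) be the dodecahedron with a new vertex in each pentagonal face, joined to the five vertices of that face. Deleting the \(12\) face vertices first is admissible at every step: each has \(5\) remaining neighbors and no deleted neighbor, so its reach is \(5\). This is also exactly what your rule of minimizing the reach does. A dodecahedral vertex that has lost \(j\) of its three face vertices has \(6-j\) remaining neighbors and reach \(6+j\), so it is either inadmissible or has reach at least \(7>5\). Afterwards, each of the \(20\) remaining vertices has only \(3\) remaining neighbors but reaches all \(9\) other vertices of the three pentagons at it; these are distinct because the dodecahedron has girth \(5\). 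The process is stuck.

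Hence a correct proof must choose vertices by a rule, or maintain an invariant stronger than ``hubs have at most \(5\) remaining neighbors,'' that excludes such dead ends before they arise. That is the real content of the cited theorem. Your proposal supplies neither, and the discharging itself is only announced as a case check.
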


\subsection{Coloring from a vertex order}

Let \(G\) have a fixed vertex order \(<\). Assign each edge \(xy\), where \(x<y\), to its earlier endpoint and put \(S_x=\{xy\in E(G):x<y\}\). We call the sets \(S_x\) \emph{star blocks} and color them in increasing order of their centers. Before \(S_x\) is colored, every edge whose earlier endpoint precedes \(x\) is colored, and every other edge is uncolored.

The planar two-step order determines the order in which the star blocks are colored. To determine which colors are unavailable for \(S_x\), we first describe every adjacency in \(L^+(G)\) from an earlier block to \(S_x\). These adjacencies split into a bounded-degree part and complete bipartite parts supported on common neighborhoods. Related star-block orders occur in strong edge-coloring~\cite{Wang2014,Yu2015}; here the order controls adjacency in \(L^+(G)\).

\begin{lemma}[Earlier adjacent edges]
\label{lem:earlier-conflict}
Let \(e=xu\in S_x\), and let \(f=wz\in S_w\) be an earlier colored edge, so \(w<x\). If \(e\) and \(f\) are adjacent in \(L^+(G)\) and \(f\ne wx\), then \(w\in P_2(x)\).
\end{lemma}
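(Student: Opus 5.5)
The plan is a case analysis on how \(e=xu\) and \(f=wz\) are adjacent in \(L^+(G)=L(G)\cup F(G)\). Throughout, we keep \(w<x\) (since \(f\) is colored earlier), \(w<z\) (since \(f\in S_w\)), and \(x<u\) (since \(e\in S_x\)). The goal in each case is to show that \(w\) is a neighbor of \(x\), or is joined to \(x\) by a path \(xu'w\) with \(u'>x\). Either conclusion places \(w\in P_2(x)\).

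First suppose \(e\) and \(f\) are adjacent in \(L(G)\), so they share an endpoint.
\begin{itemize}
\item \(w=x\) is impossible, since \(w<x\).
\item If \(w=u\), then \(u<x\), contradicting \(x<u\).
\item If \(z=x\), then \(f=wx\), which is excluded by hypothesis.
\item If \(z=u\), then \(xu,uw\in E(G)\) with \(u>x\) and \(w<x\). Hence \(w\in P_2(x)\) by the definition of \(P_2\).
\end{itemize}

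Next suppose \(e\) and \(f\) are opposite edges of a \(4\)-cycle. The four vertices \(x,u,w,z\) are distinct, and the cycle is either \(x u w z x\) or \(x u z w x\).

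In the cycle \(xuzwx\), \(w\) is adjacent to \(x\). Since \(w<x\), we get \(w\in P_1(x)\subseteq P_2(x)\).

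In the cycle \(xuwzx\), \(w\) is adjacent to \(u\) and \(z\) is adjacent to \(x\). Since \(u>x\), the path \(xuw\) again shows \(w\in P_2(x)\).

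The only step needing care is this \(F(G)\) case. The relative position of \(z\) does not matter, because one of the two orientations always gives either a direct edge \(wx\) or the middle vertex \(u>x\). I therefore do not expect a genuine obstacle. The main point is to remember that the hypothesis \(f\neq wx\) is used exactly in the case \(z=x\).
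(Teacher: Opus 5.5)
Your proof is correct and follows the paper's argument. The incidence case reduces to \(u=z\), giving the path \(xuw\) with \(u>x\), and the two cyclic orders \(xuwzx\) and \(xuzwx\) give either that same kind of path or the edge \(xw\). Your explicit elimination of \(w=x\), \(w=u\), and \(z=x\) spells out what the paper compresses into the phrase ``incident away from \(x\)''.
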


\begin{proof}
If \(e\) and \(f\) are incident away from \(x\), then \(u=z\). Since \(u>x\), the path \(xuw\) places \(w\) in \(P_2(x)\). If \(e\) and \(f\) are opposite on a \(4\)-cycle, its cyclic order is either \(x-u-w-z-x\) or \(x-u-z-w-x\). In the first case, \(xuw\) is again a two-edge path whose internal vertex follows \(x\). In the second case, \(xw\in E(G)\), so \(w\in P_1(x)\).
\end{proof}

Fix \(w\in P_2(x)\), omit the edge \(wx\) when it exists, and put \(C_{xw}=N_G(x)\cap N_G(w)\). With this notation, the following lemma decomposes the adjacencies in \(L^+(G)\) between the two star blocks. \Cref{fig:two-star-conflicts} shows the two ways in which edges from the blocks can be opposite on a \(4\)-cycle.

\begin{figure}[!htbp]
\centering
\begin{subfigure}[t]{0.46\textwidth}
\centering
\begin{tikzpicture}[x=.92cm,y=.86cm]
\node[graphvertex,label={[graphlabel]left:\(x\)}] (x) at (-1.55,0) {};
\node[graphvertex,label={[graphlabel]above:\(u\)}] (u) at (0,1.05) {};
\node[graphvertex,label={[graphlabel]right:\(w\)}] (w) at (1.55,0) {};
\node[graphvertex,label={[graphlabel]below:\(z\)}] (z) at (0,-1.05) {};
\draw[focusedge] (x)--node[graphlabel,sloped,above,font=\scriptsize,fill=white]{\(e=xu\)} (u);
\draw[graphedge] (u)--node[graphlabel,sloped,above,font=\scriptsize,fill=white]{\(uw\)} (w);
\draw[secondedge] (w)--node[graphlabel,sloped,below,font=\scriptsize,fill=white]{\(f=wz\)} (z);
\draw[graphedge] (z)--node[graphlabel,sloped,below,font=\scriptsize,fill=white]{\(zx\)} (x);
\node[graphlabel,font=\scriptsize,fill=white,inner sep=1pt] at (0,0) {\(u,z\in C_{xw}\)};
\end{tikzpicture}
\caption{Two \(x\)--\(w\) paths through \(u,z\in C_{xw}\).}
\end{subfigure}
\hfill
\begin{subfigure}[t]{0.46\textwidth}
\centering
\begin{tikzpicture}[x=.92cm,y=.86cm]
\node[graphvertex,label={[graphlabel]left:\(x\)}] (x) at (-1.55,0) {};
\node[graphvertex,label={[graphlabel]above:\(u\)}] (u) at (0,1.05) {};
\node[graphvertex,label={[graphlabel]right:\(w\)}] (w) at (1.55,0) {};
\node[graphvertex,label={[graphlabel]below:\(z\)}] (z) at (0,-.75) {};
\draw[focusedge] (x)--node[graphlabel,sloped,above,font=\scriptsize,fill=white]{\(e=xu\)} (u);
\draw[graphedge] (u)--node[graphlabel,right,font=\scriptsize,fill=white]{\(uz\)} (z);
\draw[secondedge] (z)--node[graphlabel,sloped,above,pos=.65,font=\scriptsize,fill=white]{\(f=wz\)} (w);
\draw[graphedge] (w) .. controls (1.15,-1.65) and (-1.15,-1.65) .. node[graphlabel,below,font=\scriptsize,fill=white]{\(xw\)} (x);
\end{tikzpicture}
\caption{The additional \(4\)-cycle when \(xw\in E(G)\).}
\end{subfigure}
\caption{Opposite-edge adjacencies between \(e=xu\in S_x\) and \(f=wz\in S_w\). In (a), \(u,z\in C_{xw}\) give the complete bipartite part described in \cref{lem:two-star}. In (b), the edges \(xw\) and \(uz\) give the additional \(4\)-cycle \(xuzwx\). The incidence case \(u=z\) and the remaining edges of the two star blocks are omitted.}
\label{fig:two-star-conflicts}
\end{figure}

\FloatBarrier

\begin{lemma}[Adjacencies between two star blocks]
\label{lem:two-star}
Let \(t\ge2\) be an integer, put \(h=t-1\), and let \(G\) be \(K_{2,t}\)-free. The edges of \(L^+(G)\) between \(S_x\) and \(S_w\setminus\{wx\}\) form the union of the following two bipartite graphs.
\begin{enumerate}
\item The complete bipartite graph with parts \(E_{xw}=\{xu\in S_x:u\in C_{xw}\}\) and \(A_{xw}=\{wz\in S_w:z\in C_{xw}\}\). Both sets have size at most \(h\).
\item If \(xw\in E(G)\), an additional bipartite graph whose maximum degree on each side is at most \(h-1\).
\end{enumerate}
\end{lemma}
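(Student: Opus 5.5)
The plan is to classify every adjacency in \(L^+(G)\) between an edge \(e=xu\in S_x\) and an edge \(f=wz\in S_w\) with \(f\ne wx\), where \(w<x\), so \(u>x>w\). There are two types of adjacency: the edges may be incident, or they may be opposite edges of a \(4\)-cycle.

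\emph{Incident edges.} Since \(u\ne w\) and \(x\ne w\), and \(z\ne x\) because \(f\ne wx\), incidence forces \(u=z\). Then \(u\) is adjacent to both \(x\) and \(w\), so \(u\in C_{xw}\). This gives \(e\in E_{xw}\) and \(f\in A_{xw}\), so the pair lies in part~(1).

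\emph{Opposite edges.} As in the proof of \cref{lem:earlier-conflict}, the \(4\)-cycle has cyclic order \(x\,u\,w\,z\) or \(x\,u\,z\,w\).
\begin{itemize}
\item In the first order, \(u\) and \(z\) are both common neighbors of \(x\) and \(w\). Hence \(u,z\in C_{xw}\), and again \(e\in E_{xw}\) and \(f\in A_{xw}\).
\item In the second order, \(xw\in E(G)\) and \(uz\in E(G)\). These are exactly the pairs collected into part~(2).
\end{itemize}

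\emph{The converse.} We must check that part~(1) is entirely complete, i.e.\ that every pair \(xu\in E_{xw}\), \(wz\in A_{xw}\) is really adjacent in \(L^+(G)\). If \(u=z\) the edges share \(u\). If \(u\ne z\), then \(x\,u\,w\,z\,x\) is a \(4\)-cycle. This uses \(u\ne w\) and \(z\ne x\), which hold since \(u>x>w\) and \(f\ne wx\). In that cycle \(xu\) and \(wz\) are opposite edges. Part~(2) is defined as the set of remaining adjacencies, so the union statement holds. The size bounds \(|E_{xw}|,|A_{xw}|\le|C_{xw}|\le t-1=h\) come directly from \(K_{2,t}\)-freeness.

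\emph{Degree bound in part~(2).} This is the only real work. Fix \(e=xu\). Its partners \(f=wz\) in part~(2) satisfy \(uz\in E(G)\), \(wz\in E(G)\) and \(z\ne u\). So \(z\) ranges over \(N(u)\cap N(w)\). That set contains \(x\), since \(xw,xu\in E(G)\), but \(z=x\) is excluded because \(f\ne wx\). Here \(u\ne w\) holds because \(u>x>w\). By \(K_{2,t}\)-freeness, \(|N(u)\cap N(w)|\le t-1\), so after removing \(x\) at most \(h-1\) choices of \(z\) remain.

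Symmetrically, fix \(f=wz\). Its partners \(xu\) in part~(2) have \(u\in N(x)\cap N(z)\). That set contains \(w\), but \(u\ne w\) since \(u>x>w\). Also \(x\ne z\), so again there are at most \(h-1\) choices of \(u\).

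The main point to handle carefully is that the forced common neighbor (\(x\), respectively \(w\)) is always excluded. This exclusion is what improves the bound from \(h\) to \(h-1\). The bound holds even if a pair also belongs to part~(1), since the argument only bounds a superset of the part-(2) neighbours.
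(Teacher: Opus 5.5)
Your proof is correct and follows essentially the same argument as the paper: incidence forces \(u=z\in C_{xw}\), an opposite pair on a \(4\)-cycle \(x\,u\,w\,z\) gives the complete bipartite part, and the other cycle \(x\,u\,z\,w\) (possible only when \(xw\in E(G)\)) has each side's degree bounded by \(K_{2,t}\)-freeness after excluding the forced common neighbor (\(x\) or \(w\), respectively). The only difference is organization: the paper splits first on whether \(xw\in E(G)\), while you split on adjacency type and check vertex distinctness and the completeness of part~(1) more explicitly.
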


\begin{proof}
Suppose first that \(xw\notin E(G)\). An edge \(xu\in S_x\) and an edge \(wz\in S_w\) are adjacent in \(L^+(G)\) if and only if \(u=z\), or \(x-u-w-z-x\) is a \(4\)-cycle. Either case occurs exactly when \(u,z\in C_{xw}\), giving the complete bipartite graph with parts \(E_{xw}\) and \(A_{xw}\). Since \(G\) is \(K_{2,t}\)-free, \(|C_{xw}|\le h\).

Now suppose that \(xw\in E(G)\). The complete bipartite graph remains, and the only additional possibility is a \(4\)-cycle \(x-u-z-w-x\).
For a fixed \(u\), the vertices \(x,z\) are common neighbors of \(u,w\). Since \(x\) is already one such neighbor, there are at most \(t-2=h-1\) choices for \(z\). Symmetrically, for a fixed \(z\), the vertices \(w,u\) are common neighbors of \(x,z\), and there are at most \(h-1\) choices for \(u\).
\end{proof}

To color an entire star block at once, we need a matching that avoids both parts of this decomposition. The following Hall-type lemma provides such a matching when the forbidden graph consists of a bounded-degree part and a bounded number of complete bipartite parts of bounded size.

\begin{lemma}[Hall matching lemma]
\label{lem:structured-hall}
Let \(\Gamma\) be a bipartite graph with parts \(R,U\), where \(|R|\le|U|=N\). Let \(B,q\) be nonnegative integers and \(h\) a positive integer. Suppose
\[
\Gamma=\Gamma_0\cup\bigcup_{i=1}^{q}(E_i\times A_i),
\]
where \(\Delta(\Gamma_0)\le B\), \(E_i\subseteq R\), \(A_i\subseteq U\), and \(|E_i|,|A_i|\le h\) for \(1\le i\le q\). If \(N\ge2B\) and \(N\ge B+(q+1)h\), then the bipartite complement of \(\Gamma\) in \(K_{R,U}\) has a matching that saturates \(R\).
\end{lemma}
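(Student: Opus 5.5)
The plan is to verify Hall's condition for the bipartite complement \(\overline{\Gamma}\) of \(\Gamma\) in \(K_{R,U}\). Suppose it fails. Then there is a nonempty \(X\subseteq R\), say \(|X|=s\), such that the set \(T\subseteq U\) of vertices adjacent in \(\Gamma\) to \emph{every} vertex of \(X\) satisfies \(|T|\ge N-s+1\). Indeed, \(N_{\overline{\Gamma}}(X)=U\setminus T\), so the failure \(|N_{\overline{\Gamma}}(X)|<s\) is exactly this inequality. Hence
\[
s+|T|\ge N+1\ge \max\{2B+1,\;B+(q+1)h+1\}.
\]
The goal is to contradict this, using that \(X\times T\subseteq\Gamma\) is a complete bipartite subgraph inside a graph of the restricted shape \(\Gamma_0\cup\bigcup_i(E_i\times A_i)\).

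The main tool is to double count the \(s|T|\) edges of \(X\times T\). Put \(a_i=|E_i\cap X|\le\min\{h,s\}\) and \(b_i=|A_i\cap T|\le\min\{h,|T|\}\). Then
\[
s|T|\le B\min\{s,|T|\}+\sum_{i=1}^{q}a_ib_i .
\]
I will also use the two one-vertex consequences. First, each \(x\in X\) satisfies \(|T|\le d_\Gamma(x)\le B+h\cdot\#\{i:x\in E_i\}\le B+qh\). Second, each \(u\in T\) satisfies \(s\le B+h\cdot\#\{i:u\in A_i\}\). Since \(s+|T|\ge 2B+1\), at least one of \(s,|T|\) exceeds \(B\). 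By the symmetry of the hypotheses on the two sides (the condition \(|R|\le N\) is not needed here), it suffices to treat the case \(s\le|T|\), so \(|T|>B\).

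Next I split according to whether \(s\le h\). If \(s\le h\), the count gives \(s(|T|-B)\le\sum_i a_ib_i\le qhs\), so \(|T|\le B+qh\). Combined with \(|T|\ge N-s+1\ge B+(q+1)h-s+1\), this forces \(h-s+1\le0\), contradicting \(s\le h\). If \(s>h\), then from \(|T|\ge s\) and \(|T|\le B+qh\) together with \(s+|T|\ge B+(q+1)h+1\), the count gives
\[
s\bigl((q+1)h-s+1\bigr)\le s(|T|-B)\le qh^2 .
\]
This contradicts \(s\ge h+1\) at the lower end of the range, since \(s=h+1\) gives \(qh^2+qh+h+1>qh^2\).

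The main obstacle is the upper range of \(s\) in this last case. There \(|T|-B\) is no longer forced to be large, and the crude estimate \(\sum_i a_ib_i\le qh^2\) is too weak. The first thing I would try is to exploit that for large \(s\) (namely \(s>B\)) no \(u\in T\) can be explained by \(\Gamma_0\) alone. Then every \(u\in T\) lies in some \(A_i\) with \(a_i\ge1\), and more precisely \(\sum_{i:u\in A_i}a_i\ge s-B\). Summing this over \(u\in T\) gives \(\sum_i a_ib_i\ge|T|(s-B)\). Since \(\sum_i b_i\le qh\), this forces \(|T|\) to be small. The hypothesis \(N\ge 2B\), in the form \(s+|T|>2B\), then closes the gap between the two extremes. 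If this bookkeeping does not close cleanly, the fallback is to apply Hall in its deficiency form, or to pass to a minimal violating \(X\), where every vertex of \(X\) has \(\overline{\Gamma}\)-degree less than \(s\).
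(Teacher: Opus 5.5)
Your reformulation (a violating \(X\) whose common \(\Gamma\)-neighbourhood \(T\) satisfies \(s+|T|\ge N+1\)) is fine, and so are the double count and the reduction to \(s\le|T|\), hence \(|T|>B\). Your count handles \(s\le h\) and, since \(s((q+1)h-s+1)>qh^2\) there, all of \(h<s\le qh+1\). These are averaged versions of the paper's first two subcases of \(z\le B\). But the proof stops where the remaining work is, and the repair you sketch does not reach it. The averaged bound \(s(|T|-B)\le\sum_i a_ib_i\le qh^2\) throws away integrality, and your proposed fix only engages when \(s>B\). Concretely, take \(q=1\), \(h=3\), \(B=6\), \(N=12=\max\{2B,B+(q+1)h\}\), \(s=6\), \(|T|=7\). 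Then \(s\le|T|\), \(|T|>B\), \(s>h\), \(s+|T|=N+1>2B\), \(s(|T|-B)=6\le qh^2=9\), \(|T|\le B+qh\), and \(s=B\). So neither your count, nor the \(s>B\) idea, nor the inequality \(s+|T|>2B\) yields a contradiction in this range.

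The missing idea is a per-vertex pigeonhole on \(X\). You already wrote \(|T|\le B+h\,m(x)\) with \(m(x)=\#\{i:x\in E_i\}\), but weakened it at once to \(B+qh\). Since \(\sum_{x\in X}m(x)\le\sum_i|E_i|\le qh\), if \(s>qh\) then some \(x\in X\) lies in no \(E_i\). Then \(|T|\le d_{\Gamma_0}(x)\le B\), contradicting \(|T|>B\); in the example, \(|E_1|\le3<6\) forces this. This is the paper's subcase \(qh<z\le B\), where some \(r\in Z\) lies in no \(E_i\) and \(|L(r)|\ge N-B\ge B\ge z\).

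In fact, once you have \(|T|>B\), one argument settles every \(s\). Each \(x\in X\) lies in at least \(a=\lceil(|T|-B)/h\rceil\ge1\) and at most \(q\) of the sets \(E_i\), so \(sa\le qh\) and \(|T|\le B+ah\). Hence \(s+|T|\le B+h(a+q/a)\le B+(q+1)h\), because \((a-1)(a-q)\le0\). This is the paper's \(z>B\) argument run on the other side. Your \(s>B\) sketch would likewise need this multiplicity \(a\), not just \(\sum_ia_ib_i\le qh^2\). Combined with your symmetry reduction, it would make the paper's three-subcase treatment of \(z\le B\) unnecessary.

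Two small corrections. First, the symmetry step does use \(|R|\le N\): that is what gives \(T\ne\varnothing\), so the swapped configuration has a nonempty side. Second, at \(s=h+1\) the left-hand side is \((h+1)qh=qh^2+qh\), not \(qh^2+qh+h+1\). This exceeds \(qh^2\) only when \(q\ge1\); for \(q=0\) the count gives \(|T|\le B\) directly.
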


\begin{proof}
For \(r\in R\), let \(L(r)=U\setminus N_\Gamma(r)\). We verify Hall's condition. Fix a nonempty set \(Z\subseteq R\) and put \(z=|Z|\).

First suppose that \(z\le B\). For \(r\in Z\), let \(m(r)\) be the number of sets \(E_i\) containing \(r\). Since
\[
\sum_{r\in Z}m(r)\le\sum_{i=1}^{q}|E_i|\le qh,
\]
some \(r\in Z\) satisfies \(m(r)\le qh/z\). If \(z\le h\), then
\[
|L(r)|\ge N-B-qh\ge h\ge z.
\]
If \(h<z\le qh\), then
\[
|L(r)|
\ge N-B-\frac{qh^2}{z}
\ge(q+1)h-\frac{qh^2}{z}
\ge z,
\]
where the last inequality is equivalent to \((z-h)(qh-z)\ge0\). Finally, if \(qh<z\le B\), some \(r\in Z\) belongs to none of the sets \(E_i\), and
\[
|L(r)|\ge N-B\ge B\ge z.
\]
Thus \(\left|\bigcup_{r\in Z}L(r)\right|\ge z\) whenever \(z\le B\).

Now suppose that \(z>B\), and let \(C_Z=U\setminus\bigcup_{r\in Z}L(r)\).
For each \(\alpha\in C_Z\), the graph \(\Gamma_0\) covers at most \(B\) vertices of \(Z\), so the complete bipartite blocks containing \(\alpha\) cover at least \(z-B\) vertices. If \(\alpha\) belongs to \(a_\alpha\) of the sets \(A_i\), then \(a_\alpha h\ge z-B\). Put \(a=\lceil(z-B)/h\rceil\).
Every \(\alpha\in C_Z\) belongs to at least \(a\) sets \(A_i\). Since \(\sum_i|A_i|\le qh\), we have \(|C_Z|\le qh/a\). If \(a\le q\), then
\[
\left|\bigcup_{r\in Z}L(r)\right|
\ge N-\frac{qh}{a}
\ge B+(q+1)h-\frac{qh}{a}
\ge B+ah
\ge z,
\]
where \(a+q/a\le q+1\). If \(a>q\), then \(C_Z=\varnothing\), so the union of the lists is \(U\) and has size \(N\ge|R|\ge z\). Hall's condition follows.
\end{proof}

\begin{lemma}[B-coloring from a two-step order]
\label{lem:two-step-coloring}
Let \(t\ge2\), \(p,q\ge0\), and \(0\le\rho\le t-1\) be integers. Suppose that a \(K_{2,t}\)-free graph \(G\) has a \((p,q)\)-order. If
\begin{equation}
\Delta(G)\ge
\max\{(2t-3)p,(p+q+1)(t-1)\}-\rho,
\label{eq:two-step-threshold}
\end{equation}
then \(\qB(G)\le\Delta(G)+\rho\).
\end{lemma}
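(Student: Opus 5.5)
The plan is to colour the star blocks \(S_x\) one at a time, in increasing order of their centres \(x\), using the palette \(C\) of size \(N=\Delta(G)+\rho\). Each block \(S_x\) will be coloured in one step by a matching from \cref{lem:structured-hall}. Suppose every block \(S_w\) with \(w<x\) is already coloured. An earlier coloured edge \(f\) can conflict with an edge of \(S_x\) in \(L^+(G)\) in only two ways.

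First, \(f=wx\) with \(w\in P_1(x)\). Such an edge is incident with every edge of \(S_x\), so its colour is forbidden on all of \(S_x\). We therefore delete these at most \(|P_1(x)|\) colours and work with the reduced palette \(U=C\setminus\{\varphi(wx):w\in P_1(x)\}\), of size \(N'\ge\Delta+\rho-|P_1(x)|\). Since \(|S_x|\le d(x)-|P_1(x)|\le\Delta-|P_1(x)|\), we get \(|R|\le|U|\) with \(R=S_x\).

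Second, \(f\in S_w\setminus\{wx\}\). By \cref{lem:earlier-conflict}, such an \(f\) satisfies \(w\in P_2(x)\). Edges of the current block \(S_x\) never conflict with one another except at \(x\). Indeed, two edges \(xu,xu'\) cannot be opposite on a \(4\)-cycle, and a matching automatically gives them distinct colours. So a matching in the bipartite complement of the conflict graph \(\Gamma\) between \(R\) and \(U\) yields a valid colouring of \(S_x\), and the final colouring is a B-colouring.

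Next we put \(\Gamma\) into the form required by \cref{lem:structured-hall}, using \cref{lem:two-star} for each \(w\in P_2(x)\), with \(h=t-1\).
\begin{enumerate}
\item Complete bipartite parts. For each \(w\in P_2(x)\), the first part of \cref{lem:two-star} gives \(E_{xw}\times A'_{xw}\), where \(A'_{xw}\) is the set of colours of the edges in \(A_{xw}\). Then \(|E_{xw}|\le h\) and \(|A'_{xw}|\le h\). This gives at most \(q\) complete parts in total.
\item Bounded-degree part. For each \(w\in P_1(x)\), the second part of \cref{lem:two-star} contributes edges forming a graph \(\Gamma_0\). An edge \(xu\) conflicts with at most \(h-1=t-2\) edges of \(S_w\), hence with at most \(t-2\) colours. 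A colour \(\alpha\) occurs on at most one edge of \(S_w\), by properness at \(w\), and that edge conflicts with at most \(t-2\) edges of \(S_x\).
\end{enumerate}
Summing over the at most \(p\) vertices of \(P_1(x)\) gives \(\Delta(\Gamma_0)\le B:=p(t-2)\) on both sides.

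It remains to check the two numerical hypotheses of \cref{lem:structured-hall} with \(N'\ge\Delta+\rho-p\). The condition \(N'\ge 2B\) follows from \(\Delta+\rho-p\ge 2p(t-2)\), which is \(\Delta+\rho\ge(2t-3)p\). The condition \(N'\ge B+(q+1)h\) follows from \(\Delta+\rho-p\ge p(t-2)+(q+1)(t-1)\), which is \(\Delta+\rho\ge(p+q+1)(t-1)\). Both are exactly \eqref{eq:two-step-threshold}. The lemma then gives a matching saturating \(S_x\), and induction over the order completes the colouring with \(\Delta+\rho\) colours.

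The delicate step is the bookkeeping for \(w\in P_1(x)\). There \(w\) contributes three things: the incident edge \(wx\), which we remove from the palette rather than count in \(B\); a complete part, which is counted among the \(q\) parts because \(P_1(x)\subseteq P_2(x)\); and a degree-\((t-2)\) part on both sides of \(\Gamma_0\). Handling \(wx\) by shrinking the palette is what makes the arithmetic land on \((2t-3)p\) and \((p+q+1)(t-1)\) exactly. For \(t=2\) we have \(B=0\), and the argument goes through unchanged.
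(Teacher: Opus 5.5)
Your proposal is correct and follows the paper's proof essentially step for step. You shrink the palette by the colors on the edges $wx$ with $w\in P_1(x)$, write the conflict graph via \cref{lem:earlier-conflict} and \cref{lem:two-star} as at most $q$ complete parts plus a part of bounded degree, and apply \cref{lem:structured-hall}. There are two small differences. You take $B=p(t-2)$ rather than $B=|P_1(x)|(t-2)$, which is harmless. You should also define $A'_{xw}$ as only those colors of $A_{xw}$ that lie in $U$, as the paper does with $\widehat A_{xw}$, since \cref{lem:structured-hall} requires $A_i\subseteq U$.
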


\begin{proof}
Put \(h=t-1\), use a fixed color set \(C\) of size \(K=\Delta(G)+\rho\), and color the star blocks \(S_x\) in increasing order of their centers. Suppose that all earlier blocks have been colored so that any two colored edges adjacent in \(L^+(G)\) have distinct colors.

Let \(d=|P_1(x)|\). The \(d\) colored edges incident with \(x\) have pairwise distinct colors. Remove these colors from \(C\) and call the remaining set \(U\). Then
\[
N:=|U|=\Delta(G)+\rho-d,
\qquad
|S_x|=d_G(x)-d\le N.
\]

Construct a bipartite forbidden graph with parts \(S_x\) and \(U\), joining \(e\in S_x\) to \(\alpha\in U\) when \(\alpha\) appears on an earlier edge adjacent to \(e\) in \(L^+(G)\). By \cref{lem:earlier-conflict}, only centers in \(P_2(x)\) occur. For each such center \(w\), let \(\widehat A_{xw}\subseteq U\) be the set of colors in \(U\) used on the edges in \(A_{xw}\) defined in \cref{lem:two-star}. Since \(S_w\) is a properly colored star, \(|\widehat A_{xw}|\le|A_{xw}|\le h\). The complete bipartite part in \cref{lem:two-star} therefore gives the forbidden block \(E_{xw}\times\widehat A_{xw}\), and there are at most \(q\) such blocks.

For each of the \(d\) direct predecessors, the additional bipartite graph in \cref{lem:two-star} has maximum degree at most \(h-1\) on the edge side. Its maximum degree on the color side is also at most \(h-1\), because each color occurs on at most one edge of an earlier star block. Their union consequently has maximum degree at most \(B=d(h-1)\).

The two numerical hypotheses of \cref{lem:structured-hall} follow from \eqref{eq:two-step-threshold}. Indeed,
\[
N\ge\Delta(G)+\rho-p
\ge2p(h-1)
\ge2d(h-1)=2B,
\]
and
\[
\begin{aligned}
N
&\ge\Delta(G)+\rho-d\\
&\ge(p+q+1)h-d\\
&\ge d(h-1)+(q+1)h\\
&=B+(q+1)h.
\end{aligned}
\]
By \cref{lem:structured-hall}, the bipartite complement of the forbidden graph has a matching saturating \(S_x\). Color the block according to this matching. Any two colored edges adjacent in \(L^+(G)\) still have distinct colors, and induction over the star blocks gives a B-coloring of \(G\).
\end{proof}

Applying the two-step coloring lemma to the \((5,8)\)-order of a planar graph gives the following bound.

\begin{lemma}[Planar coloring bound]
\label{lem:planar-tradeoff}
Let \(t\ge2\) and \(0\le\rho\le t-1\) be integers, and let \(G\) be a \(K_{2,t}\)-free planar graph. If \(\Delta(G)\ge14(t-1)-\rho\), then \(\qB(G)\le\Delta(G)+\rho\).
\end{lemma}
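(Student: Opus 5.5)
This follows by substituting the planar order from \cref{thm:planar-two-step-order} into \cref{lem:two-step-coloring}. By \cref{thm:planar-two-step-order}, $G$ has a $(5,8)$-order, so we take $p=5$ and $q=8$. Then \cref{lem:two-step-coloring} gives $\qB(G)\le\Delta(G)+\rho$ whenever
\[
\Delta(G)\ge\max\{5(2t-3),\,14(t-1)\}-\rho .
\]

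We compare the two terms in the maximum:
\[
14(t-1)-5(2t-3)=14t-14-10t+15=4t+1>0 .
\]
Hence the maximum equals $14(t-1)$ for every $t\ge2$. The hypothesis $\Delta(G)\ge14(t-1)-\rho$ is therefore exactly condition \eqref{eq:two-step-threshold}.

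The remaining hypotheses of \cref{lem:two-step-coloring} also hold. The integers $t\ge2$ and $0\le\rho\le t-1$ are as in that lemma, and $G$ is $K_{2,t}$-free by assumption. The lemma therefore applies and yields the bound. The only real step is checking that the planar $(5,8)$-order makes the second term of the maximum the dominant one.
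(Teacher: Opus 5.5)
Your proposal is correct and follows the paper's proof: both substitute the \((5,8)\)-order from \cref{thm:planar-two-step-order} into \cref{lem:two-step-coloring}. Both also observe that \(5(2t-3)<14(t-1)\), which the paper writes as \(10h-5<14h\) with \(h=t-1\), so \eqref{eq:two-step-threshold} reduces exactly to \(\Delta(G)\ge14(t-1)-\rho\).
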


\begin{proof}
Apply \cref{thm:planar-two-step-order,lem:two-step-coloring} with \((p,q)=(5,8)\) and \(h=t-1\).
Since
\[
(2h-1)p=10h-5<14h=(p+q+1)h,
\]
condition \eqref{eq:two-step-threshold} becomes \(\Delta(G)\ge14(t-1)-\rho\), as required.
\end{proof}

Taking \(\rho=t-1\) and \(\rho=0\), respectively, gives
\begin{equation}
\qB(G)\le
\begin{cases}
\Delta(G)+t-1, & \text{if }\Delta(G)\ge13(t-1),\\[2pt]
\Delta(G), & \text{if }\Delta(G)\ge14(t-1).
\end{cases}
\label{eq:target-from-tradeoff}
\end{equation}
In the second case, \(\qB(G)=\Delta(G)\), since \(\qB(G)\ge\Delta(G)\).

\begin{proof}[Proof of Theorem~A]
If \(t=2\), then \(G\) has no \(4\)-cycle, so \(\qB(G)=\chi'(G)\). The theorems of Sanders and Zhao and of Zhang give \(\chi'(G)=\Delta(G)\) for planar graphs with \(\Delta(G)\ge7\)~\cite{SandersZhao2001,Zhang2000}. For \(t\ge3\), the second case above applies.
\end{proof}

The following example shows why the lower bound on \(\Delta(G)\) in Theorem~A must depend on \(t\).

\begin{proposition}[A degree obstruction to equality]
\label{prop:equality-threshold-lower}
For every integer \(t\ge2\), there is a planar \(K_{2,t}\)-free graph \(L_t\) with \(\Delta(L_t)=2t-2\) and \(\qB(L_t)\ge2t-1\). Consequently, a sufficient lower bound on \(\Delta(G)\) that forces \(\qB(G)=\Delta(G)\) for all \(K_{2,t}\)-free planar graphs must be at least \(2t-2\) in general and cannot be independent of \(t\).
\end{proposition}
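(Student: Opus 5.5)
The plan is to exhibit an explicit planar graph with a small clique of \(L^+\)-adjacent edges, since \(\qB(G)=\chi(L^+(G))\ge\omega(L^+(G))\). The basic gadget is \(K_{2,t-1}\) together with the edge between the two vertices of the small side, padded with pendant edges to raise the maximum degree to \(2t-2\).

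\textbf{Construction.} Let \(a,b\) be two vertices and \(c_1,\ldots,c_{t-1}\) further vertices. Join both \(a\) and \(b\) to every \(c_i\), and add the edge \(ab\). Finally attach \(t-2\) pendant vertices \(p_1,\ldots,p_{t-2}\) to \(a\). Call the result \(L_t\). For \(t=2\) there are no pendants and \(L_2\) is a triangle.

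\textbf{Planarity and maximum degree.} The graph \(L_t\) is planar: place the \(c_i\) on a line between \(a\) and \(b\), draw \(ab\) around the outside, and draw the pendants in a face at \(a\). The degrees are \(d(a)=(t-1)+1+(t-2)=2t-2\), \(d(b)=t\), \(d(c_i)=2\) and \(d(p_j)=1\). Since \(t\le 2t-2\) for \(t\ge2\), we get \(\Delta(L_t)=2t-2\).

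\textbf{\(K_{2,t}\)-freeness.} I would check the number of common neighbors pair by pair.
\begin{itemize}
\item \(a,b\): their common neighbors are exactly \(c_1,\ldots,c_{t-1}\), so there are \(t-1\).
\item \(c_i,c_j\): the common neighbors are \(a,b\), so there are \(2\le t-1\) when \(t\ge3\). When \(t=2\) there is only one \(c\), so no such pair exists.
\item \(a,c_i\): the only common neighbor is \(b\).
\item \(b,c_i\): the only common neighbor is \(a\).
\item Any pair involving a pendant \(p_j\): the only possible common neighbor is \(a\).
\end{itemize}
Every pair therefore has at most \(t-1\) common neighbors, as required.

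\textbf{The clique in \(L^+(L_t)\).} Consider the \(2t-1\) edges \(ab,ac_1,\ldots,ac_{t-1},bc_1,\ldots,bc_{t-1}\). I claim they are pairwise adjacent in \(L^+(L_t)\).
\begin{itemize}
\item The edge \(ab\) meets every other edge in the set at \(a\) or at \(b\).
\item Any two edges \(ac_i,ac_j\) share \(a\), and any two edges \(bc_i,bc_j\) share \(b\).
\item The edges \(ac_i\) and \(bc_i\) share \(c_i\).
\item For \(i\ne j\), the edges \(ac_i\) and \(bc_j\) are opposite edges of the \(4\)-cycle \(a\,c_i\,b\,c_j\,a\).
\end{itemize}
Hence these edges form a clique of size \(2t-1\) in \(L^+(L_t)\), and so \(\qB(L_t)\ge 2t-1>\Delta(L_t)\).

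\textbf{The consequence.} Suppose \(M\) is a threshold such that \(\Delta(G)\ge M\) forces \(\qB(G)=\Delta(G)\) for every \(K_{2,t}\)-free planar graph \(G\). Then \(L_t\) must fall below the threshold, so \(M>2t-2\). In particular \(M\) grows with \(t\) and cannot be independent of \(t\).

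No step is a real obstacle. The only point needing care is the \(K_{2,t}\)-free check for \(t=2\), where the bound \(2\le t-1\) fails; it is handled by noting there is then a single vertex \(c_1\) and no pair \(c_i,c_j\).
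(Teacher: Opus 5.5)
Your proposal is correct and matches the paper's proof essentially verbatim. You use the same graph (adjacent \(a,b\) with \(t-1\) common neighbors and \(t-2\) leaves at \(a\)) and the same clique of \(2t-1\) edges in \(L^+(L_t)\); your pair-by-pair \(K_{2,t}\)-freeness check and your remark that any threshold must exceed \(2t-2\) just spell out details the paper leaves implicit.
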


\begin{proof}
Construct \(L_t\) from adjacent vertices \(a,b\) by adding \(t-1\) common neighbors \(z_1,\ldots,z_{t-1}\) and attaching \(t-2\) leaves to \(a\), as shown in \cref{fig:lower-bound-graph}. This graph is planar and \(K_{2,t}\)-free, and \(\Delta(L_t)=2t-2\). The \(2t-1\) edges in \(\{ab\}\cup\{az_i,bz_i:1\le i\le t-1\}\) form a clique in \(L^+(L_t)\). Edges at a common endpoint are incident, and for distinct \(i,j\), the edges \(az_i\) and \(bz_j\) are opposite on the \(4\)-cycle \(az_ibz_ja\). Hence \(\qB(L_t)\ge2t-1\).
\end{proof}

\begin{figure}[htbp]
\centering
\begin{tikzpicture}[x=1.0cm,y=.82cm]
\node[graphvertex,label={[graphlabel]below:\(a\)}] (a) at (-2.7,0) {};
\node[graphvertex,label={[graphlabel]below:\(b\)}] (b) at (2.7,0) {};
\node[graphvertex,label={[graphlabel]right:\(z_1\)}] (z1) at (0,1.6) {};
\node[graphvertex,label={[graphlabel]right:\(z_2\)}] (z2) at (0,.8) {};
\node[ellipsislabel] at (0,0) {\(\vdots\)};
\node[graphvertex,label={[graphlabel]right:\(z_{t-2}\)}] (ztwo) at (0,-.8) {};
\node[graphvertex,label={[graphlabel]right:\(z_{t-1}\)}] (zone) at (0,-1.6) {};
\foreach \z in {z1,z2,ztwo,zone}{
  \draw[graphedge] (a)--(\z);
  \draw[graphedge] (b)--(\z);
}
\draw[focusedge] (a) to[bend left=70,looseness=1.08] (b);
\node[graphvertex] (w1) at (-4.15,1.25) {};
\node[graphvertex] (w2) at (-4.45,.45) {};
\node[ellipsislabel] at (-4.55,-.25) {\(\vdots\)};
\node[graphvertex] (w3) at (-4.15,-1.1) {};
\draw[graphedge] (a)--(w1);
\draw[graphedge] (a)--(w2);
\draw[graphedge] (a)--(w3);
\node[graphlabel,align=center] at (-4.75,-1.85) {\(t-2\) leaves};
\node[graphlabel,align=left] at (1.35,-2.25) {\(t-1\) common neighbors};
\end{tikzpicture}
\caption{The planar graph \(L_t\). The red edge \(ab\), together with the \(2(t-1)\) edges joining \(a,b\) to their common neighbors, corresponds to a clique of order \(2t-1\) in \(L^+(L_t)\).}
\label{fig:lower-bound-graph}
\end{figure}

\section{Proof of Theorem~B}
\label{sec:theorem-b}

We first prove a fixed-\(t\) form of Theorem~B that is sharper than its uniform statement.

\begin{theorem}
\label{thm:fixed-t-thresholds}
Let \(t\ge2\) be an integer, and let \(G\) be a \(K_{2,t}\)-free planar graph. Then \(\qB(G)\le\Delta(G)+t-1\) whenever \(t=2\) or \(t\ge35\), and also whenever \(3\le t\le34\) and \(\Delta(G)\ge13(t-1)\).
\end{theorem}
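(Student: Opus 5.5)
The statement has three cases, and I would handle them separately.

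\emph{Case \(t=2\).} Here \(G\) has no \(4\)-cycle, so \(\qB(G)=\chi'(G)\). Vizing's theorem gives \(\chi'(G)\le\Delta(G)+1=\Delta(G)+t-1\). No condition on \(\Delta\) is needed.

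\emph{Case \(3\le t\le34\).} Apply \cref{lem:planar-tradeoff} with \(\rho=t-1\). This is the first line of \eqref{eq:target-from-tradeoff}: if \(\Delta(G)\ge13(t-1)\), then \(\qB(G)\le\Delta(G)+t-1\).

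\emph{Case \(t\ge35\).} This is the real content, since no lower bound on \(\Delta\) is allowed. The introduction says a structural theorem for plane graphs replaces the degree assumption.

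\emph{Splitting by degree.} If \(\Delta(G)\ge13(t-1)\), \cref{lem:planar-tradeoff} again applies. So assume \(\Delta(G)<13(t-1)\) and work with the palette size \(K=\Delta+t-1\). Then \(K>\Delta+\Delta/13\), but more usefully \(t-1>\Delta/13\), so every vertex has plenty of slack relative to the \(t-2\) terms.

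\emph{Induction via vertex extension.} Take a minimal counterexample \(G\) and delete a vertex \(v\). By minimality, \(G-v\) has a B-coloring with \(K\) colors. I would extend it using \cref{lem:three-source} together with the count in \cref{lem:packet-load}. At \(x_i\), at most
\[
d(x_i)-1+\sum_{j\ne i}\min\{t-2,d(x_j)-1\}
\]
colors are forbidden. The lists \(L_i\) must then admit a system of distinct representatives, and it suffices that \(|L_i|\ge r\) for every \(i\).

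\emph{Reducible configuration.} This requires a vertex \(v\) of degree \(r\) with neighbors \(x_1,\dots,x_r\) such that
\[
K-\Bigl(d(x_i)-1+\sum_{j\ne i}(d(x_j)-1)\Bigr)\ge r \quad\text{for all } i.
\]
Using the trivial bound \(\min\{t-2,\cdot\}\le d(x_j)-1\), this holds as soon as \(\sum_j(d(x_j)-1)\le t-2\) or so. That inequality is too strong in general. The better bound is \(\Delta-1+\sum_{j\ne i}\min\{t-2,d(x_j)-1\}\le K-r\), i.e.
\[
\sum_{j\ne i}\min\{t-2,d(x_j)-1\}\le t-r.
\]
This holds if \(v\) has small degree \(r\) and all but at most one neighbor are of small degree, with the remaining degree sum at most about \(t\).

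\emph{The structural theorem.} The key step is a structural result for plane graphs. I expect something like a theorem of Borodin type: every plane graph with minimum degree assumptions contains a vertex \(v\) of degree at most \(5\) whose neighbors, except possibly a bounded number, have degree at most some absolute constant \(c\). Alternatively, it may contain an edge \(uv\) with \(d(u)+d(v)\le 13\), or a light star, such as a \(2\)-vertex adjacent to small vertices.

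\emph{Matching the constant \(35\).} I would aim for a configuration whose degree sum is at most about \(34\), making the inequality above hold exactly when \(t\ge35\). The candidate is a vertex of degree \(r\le5\) where \(\sum_j\min\{t-2,d(x_j)-1\}\) is bounded by an absolute constant, using only small-degree neighbors plus at most one large neighbor. The large neighbor is charged \(d(x_i)-1\le\Delta-1\) for properness only when it is \(x_i\) itself.

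\emph{Minimum degree.} Vertices of degree \(1\) are handled trivially: one list of size at least \(K-(\Delta-1)\ge t\ge1\). So I may assume \(\delta\ge2\) and invoke a light-subgraph theorem for plane graphs with \(\delta\ge2\). Care is needed because \(2\)-vertices may be adjacent to two large vertices. Then \(\sum_{j\ne i}\min\{t-2,\cdot\}\) contributes \(t-2\) and the bound \(K-(\Delta-1)-(t-2)=2\ge r=2\) still works. More generally, the deleted vertex may have \(\min\) terms equal to \(t-2\) only if \(r\) is tiny.

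\emph{Main obstacle.} The hard step is choosing the right light configuration so that the total contributions stay below \(t-r\) with \(t\ge35\). I would first try a Jendrol'--Voss type light-star result: every plane graph with \(\delta\ge2\) has either a \(2\)-vertex with a neighbor of degree \(\le c\) or a \(k\)-star with bounded degrees. I would then check case by case that each configuration is reducible. For configurations where a vertex extension at a single vertex fails, I would fall back on deleting an edge and using Hall's theorem on the two lists.
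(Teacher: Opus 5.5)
Your cases \(t=2\) (Vizing) and \(3\le t\le34\) (\cref{lem:planar-tradeoff} with \(\rho=t-1\)) are exactly the paper's. The gap is the case \(t\ge35\), which is the real content of the statement. You never commit to a structural theorem, and the reduction you propose cannot work: a vertex \(v\) of degree \(r\le5\) at which the bounds of \cref{lem:packet-load} leave lists of size at least \(r\) need not exist.

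To see this, replace every edge \(xy\) of the icosahedron by \(t-1\) brothers that are adjacent to \(x\) and \(y\) and form a path. The result is planar and \(K_{2,t}\)-free with \(\Delta=5(t-1)<13(t-1)\). Every vertex of degree at most \(5\) is a brother whose two poles have degree \(\Delta\) and \(t-1\) common neighbors. Hence your condition \(\sum_{j\ne i}\min\{t-2,d(x_j)-1\}\le t-r\) fails at every candidate, since the left side is at least \((t-2)+3\) when \(x_i\) is a pole. The situation is worse than that. For an internal brother \(v=z_i\), the sharp bound on colors forbidden for \(vx\) is \((\Delta-1)+(t-2)+2=K\): the edges at \(x\), the other pole edges at \(y\), and \(z_{i-2}z_{i-1}\), \(z_{i+1}z_{i+2}\). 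So no color is guaranteed, and no choice from fixed lists can be justified, by Hall's theorem or otherwise, whether you delete \(v\) or only \(xv\).

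The missing ingredients are twofold. The first is \cref{lem:stars-bunches}, whose alternative B2 is exactly such a long bunch. The second is the recoloring step. After deleting \(v=z_4\), recolor \(z_2z_3\) and \(z_5z_6\) with a pole-edge color of a distant brother (\(a_6\) or \(b_6\), respectively \(a_2\) or \(b_2\)). This is legal because the only neighbors of \(z_2z_3\) in \(L^+(H)\) are pole edges at \(z_1,z_2,z_3\), the parental edge, and \(z_1z_2\). These colors are already forbidden for both \(xv\) and \(yv\), so the forbidden count drops to \((D-1)+(s-1)\le K-2\). The subcases \(m=6\) and a parental edge, including the extremal case \(t=D=35\), need further arguments.

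Even in the light-star alternative B1, your counting falls short of \(35\). With one heavy neighbor \(y\) and light neighbors of degree sum \(S\le38\), \cref{lem:packet-load} leaves at least \(K-(D-1)-(S-k+1)\ge t+k-39\) colors for \(vy\). For \(k=3\) this is positive only when \(t\ge37\). The configuration with degree sum about \(34\) that you hope for is not supplied by any result you invoke.

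The paper gains the two missing colors as follows. It proves a fixed-palette statement (\cref{lem:large-t-fixed-palette}), with palette size \(D+t-1\) for a fixed \(D\ge\Delta(G)\), and takes the counterexample edge-maximal. Then two consecutive light neighbors \(a,b\) of \(v\) must be adjacent, and \(ab\) is overcounted by two. The paper also uses \(\delta\ge3\) to force \(k\ge3\). It treats \(D\le28\) separately via the \(2\Delta+6\) bound, so that \(K\ge63\) is available for the light edges \(vx_i\) and the brother edges.
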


The bound in \eqref{eq:target-from-tradeoff} proves the range \(3\le t\le34\). A result of Vuolo~\cite{Vuolo2026}, which appeared during the preparation of this manuscript, gives the bound for \(t\ge39\) without a lower bound on \(\Delta(G)\); the proof below covers all \(t\ge35\) and does not use this result.

\subsection{The case \texorpdfstring{\(t\ge35\)}{t >= 35}}

For \(t\ge35\), we use the stars-and-bunches structure of plane graphs~\cite{BorodinBroersmaGlebovVandenHeuvel2001,KongWangZheng2026}. In a plane graph \(G\), a \emph{bunch} \(B(x,y;m)\) is a maximal sequence \(Q_1,\ldots,Q_m\) of paths of length one or two between two poles \(x,y\), consecutive around both poles, such that \(Q_i\cup Q_{i+1}\) is a nonseparating cycle for \(1\le i<m\). If \(Q_i=xz_iy\), then \(z_i\) is a \emph{brother}; if \(Q_i=xy\), then \(xy\) is a \emph{parental edge}. A brother \(z_i\) is \emph{internal} when \(2\le i\le m-1\), and we call \(xz_i\) and \(yz_i\) its pole edges. Every internal brother has degree \(2\), \(3\), or \(4\) and is adjacent only to \(x,y\) and possibly its neighboring brothers, as illustrated in \cref{fig:bunch}.

\begin{figure}[htbp]
\centering
\begin{subfigure}[t]{0.46\textwidth}
\centering
\begin{tikzpicture}[x=.8cm,y=.8cm]
\node[graphvertex,label={[graphlabel]above:\(x\)}] (x) at (0,2) {};
\node[graphvertex,label={[graphlabel]below:\(y\)}] (y) at (0,-2) {};
\foreach \i/\xx in {1/-2.2,2/-1.1,3/0,4/1.1,5/2.2}{
  \node[graphvertex,label={[graphlabel]above right:\(z_{\i}\)}] (z\i) at (\xx,0) {};
  \draw[graphedge] (x)--(z\i)--(y);
}
\draw[optionaledge] (z2)--(z3);
\draw[optionaledge] (z3)--(z4);
\end{tikzpicture}
\caption{A bunch with brother paths only.}
\end{subfigure}
\hfill
\begin{subfigure}[t]{0.46\textwidth}
\centering
\begin{tikzpicture}[x=.8cm,y=.8cm]
\node[graphvertex,label={[graphlabel]above:\(x\)}] (x) at (0,2) {};
\node[graphvertex,label={[graphlabel]below:\(y\)}] (y) at (0,-2) {};
\foreach \i/\xx in {1/-2.2,2/-1.1,3/1.1,4/2.2}{
  \node[graphvertex,label={[graphlabel]above right:\(z_{\i}\)}] (z\i) at (\xx,0) {};
  \draw[graphedge] (x)--(z\i)--(y);
}
\draw[focusedge] (x)--(y);
\draw[optionaledge] (z1)--(z2);
\draw[optionaledge] (z3)--(z4);
\end{tikzpicture}
\caption{A bunch containing the parental edge \(xy\).}
\end{subfigure}
\caption{The bunch \(B(x,y;m)\). Each solid two-edge path through a brother belongs to the bunch; dashed edges indicate possible adjacencies between consecutive brothers. The parental edge, when present, is highlighted in red.}
\label{fig:bunch}
\end{figure}

\Needspace{7\baselineskip}
\begin{lemma}[Stars and bunches~\cite{BorodinBroersmaGlebovVandenHeuvel2001,KongWangZheng2026}]
\label{lem:stars-bunches}
Let \(G\) be a plane graph with \(\delta(G)\ge2\). Then \(G\) contains one of the following configurations.
\begin{enumerate}[label=\textup{B\arabic*.}]
\item A \(k\)-vertex \(v\), where \(2\le k\le5\), with neighbors \(x_1,\ldots,x_k\) such that \(d_G(x_i)\le25\) for \(1\le i<k\) and \(\sum_{i=1}^{k-1}d_G(x_i)\le38\).
\item A bunch \(B(x,y;m)\) with \(d_G(x)\ge26\) and \(m\ge d_G(x)/5\).
\end{enumerate}
\end{lemma}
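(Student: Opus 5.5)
The plan is a discharging argument on a counterexample: a plane graph \(G\) with \(\delta(G)\ge2\) that contains neither B1 nor B2. We may add edges inside faces without creating new configurations only with care, so instead we work with \(G\) as given and assign initial charge \(\mu(v)=d_G(v)-6\) to each vertex and \(\mu(f)=2d(f)-6\) to each face. By Euler's formula the total charge is \(-12\). The goal is to redistribute charge so that every vertex and face ends nonnegative, which is a contradiction. Call a vertex \emph{big} if its degree is at least \(26\) and \emph{small} otherwise.

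\textbf{Step 1: local consequences of the absence of B1.} Every vertex \(v\) of degree \(k\le5\) either has at least two big neighbours, or has the property that its neighbour degrees, after discarding the largest one, sum to more than \(38\). I will record, for each \(k\in\{2,\dots,5\}\), the resulting lower bounds on the neighbour degrees. For instance, a \(2\)-vertex must have both neighbours of degree at least \(39\) or both big. These bounds show that a small vertex of degree at most \(5\) either lies next to at least two big vertices, or has neighbours whose total degree is large enough that moderately sized neighbours can pay for it.

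\textbf{Step 2: discharging rules.}
\begin{enumerate}
\item Faces of size at least \(4\) send their surplus \(2d(f)-6\) to incident vertices of degree at most \(5\), for example \(1\) per incident small vertex. This is enough, since \(2d(f)-6\ge d(f)\) for \(d(f)\ge6\), and small cases are checked by hand.
\item Each big vertex \(x\) sends a fixed amount \(\alpha\), around \(1\), to each incident triangular face or adjacent \(\le5\)-vertex.
\item Vertices of degree \(6\) to \(25\) send smaller amounts to adjacent \(\le5\)-vertices, in proportion to their own degree. The thresholds \(25\) and \(38\) in B1 are exactly what makes this balance.
\end{enumerate}
Checking that vertices of degree \(\le25\) and all faces end nonnegative should then be a finite case analysis using Step 1.

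\textbf{Step 3: big vertices, the main obstacle.} A big vertex \(x\) of degree \(d\) must give away at most \(d-6\). Around \(x\), group its neighbours into maximal runs forming bunches \(B(x,y;m)\) with a common second pole \(y\). The absence of B2 gives \(m<d/5\) for every bunch at \(x\). Inside a bunch, the internal brothers have degree at most \(4\) and are shared between the two poles \(x\) and \(y\), so each pole pays only about half of each brother's deficit. The remaining neighbours of \(x\), those not inside long bunches, are separated by faces or edges that either break the bunch structure or lead to vertices of degree at least \(6\), and these require no payment.

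The hard part is the counting lemma. If every bunch at \(x\) has length below \(d/5\), then the charge \(x\) sends, roughly \(\tfrac12\) per internal brother plus a bounded cost per bunch boundary and per remaining small neighbour, totals at most \(d-6\) once \(d\ge26\). I would try first to charge each bunch boundary a constant such as \(2\), and to use the bound on bunch length to show that the number of boundaries is at least \(5\), so that the per-edge average stays below \(1-6/d\). Tuning these constants against the \(25\)/\(38\) thresholds of B1 is where I expect the real difficulty. This is essentially the argument of Borodin, Broersma, Glebov and van den Heuvel, as refined by Kong, Wang and Zheng.
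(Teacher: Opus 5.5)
\textbf{There is a genuine gap.} The paper gives no proof of this lemma. It quotes it from Borodin--Broersma--Glebov--van den Heuvel and Kong--Wang--Zheng and uses it as a black box. As a self-contained argument, your proposal is only an outline. The discharging rules are never fixed, and both verifications that make up the lemma are deferred: that faces and vertices of degree at most \(25\) end nonnegative, and that a vertex of degree \(d\ge26\) spends at most \(d-6\).

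The rules you do state fail as written.
\begin{itemize}
\item Rule~2 overspends. A vertex of degree \(d\ge26\) whose faces are all triangles sends about \(d\) to its faces alone, yet owns only \(d-6\); at \(d=26\) the per-face budget is \(10/13\), before any \(\le5\)-neighbour is paid. Since \(3\)-faces start at charge \(0\), any transfer to them must be conditional on a low-degree vertex being present.
\item Rule~3 makes \(6\)-vertices, which have charge \(0\), send positive amounts.
\item Rule~1 cannot give \(1\) to every low-degree vertex on a \(4\)-face (charge \(2\)) or a \(5\)-face (charge \(4\)). These are exactly the faces inside bunches.
\item The absence of B1 does not make the second pole \(y\) of a bunch big. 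At an internal \(3\)- or \(4\)-brother, B1 can fail because a neighbouring end brother has large degree while \(y\) is small. So ``each pole pays half'' is not available.
\end{itemize}

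More seriously, the accounting in Step~3 points the wrong way. Take a run of \(2\)-vertex brothers separated by \(4\)-faces. Each brother has charge \(-4\) and receives \(1\) from each of its two faces, so the poles must still supply \(2\) per brother: about \(1\) each, not \(1/2\). Your own requirement that the per-edge average stay below \(1-6/d\) already contradicts a cost of \(1/2\) per brother.

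With a budget of \(d-6\), there is then no room for a positive cost per bunch boundary. The boundaries are where \(x\) must \emph{save}. Suppose consecutive bunches at \(x\) are separated by \(5\)-faces \(xzy_jy_{j+1}z'\), each of charge \(4\). Then an end brother needs only \(1/2\) from \(x\), and a bunch of length \(m_j\) costs \(x\) exactly \(m_j-1\). In total \(x\) spends \(d-k\), where \(k\) is the number of bunches at \(x\).

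The hypothesis \(m_j<d/5\) with \(\sum_j m_j=d\) gives \(k\ge6\). Your ``at least \(5\)'' would leave \(x\) one unit short. The count is also tight: for \(d=30\) with six bunches of five brothers, \(x\) spends exactly \(24=d-6\). So every other neighbourhood type must be handled with zero slack: \(3\)-, \(4\)- and \(5\)-neighbours, triangular faces, and small second poles. That case analysis is the substance of the cited proofs.

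In this paper the lemma should simply be cited, as it is.
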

The proof uses vertex deletion, so we retain one fixed set of \(D+t-1\) colors. We therefore prove the following stronger form.

\begin{lemma}
\label{lem:large-t-fixed-palette}
Let \(t\ge35\) be an integer, let \(G\) be a \(K_{2,t}\)-free planar graph, and let \(D\ge\Delta(G)\) be an integer. Then \(G\) has a B-coloring from any fixed set of \(K=D+t-1\) colors.
\end{lemma}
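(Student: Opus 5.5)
We argue by minimal counterexample: take \(G\) with \(|V(G)|+|E(G)|\) minimum that admits no B-coloring from some fixed set \(C\) of \(K=D+t-1\) colors, and fix a plane embedding. Deleting a vertex or edge keeps \(G\) planar, \(K_{2,t}\)-free, and \(D\ge\Delta\), so every proper subgraph is B-colorable from \(C\); this is why the palette is fixed in the statement.

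\emph{Degree-\(1\) vertices.} First we show \(\delta(G)\ge2\). If \(v\) has a single neighbor \(x_1\), color \(G-v\) and apply \cref{lem:three-source} with \(r=1\). No opposite-edge constraints arise, so only \(d_G(x_1)-1\le D-1<K\) colors are forbidden. Hence \cref{lem:stars-bunches} applies to \(G\).

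\emph{Configuration B1.} Let \(v\) have degree \(k\le5\) and neighbors \(x_1,\dots,x_k\). Color \(G-v\) and extend using \cref{lem:three-source,lem:packet-load}. For \(i<k\), \eqref{eq:packet-compressed} bounds the forbidden colors by \(d(x_i)-1+\sum_{j\ne i}\min\{t-2,d(x_j)-1\}\). Since \(d(x_j)\le25\) for \(j<k\), each such summand is at most \(24\le t-2\). The term for \(x_k\) is at most \(t-2\). So the forbidden count is at most \(\sum_{j<k}(d(x_j)-1)+t-2\le 38+t-2\). This leaves \(|L_i|\ge D+1-38\) colors. For \(x_k\), the forbidden count is at most \(D-1+\sum_{j<k}(d(x_j)-1)\le D+37\), which can exceed \(K=D+t-1\) only if \(t<39\).

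We therefore refine the count at \(x_k\). Opposite edges from \(x_j\) number at most \(\min\{t-2,d(x_j)-1\}\). In addition, colors at \(x_k\) already seen at \(x_j\) cannot both be counted, since \(\varphi\) is proper only at shared vertices. So the direct bound is \(|L_k|\ge K-(D-1)-\sum_{j<k}\min\{t-2,d(x_j)-1\}\ge t-\sum_{j<k}(d(x_j)-1)\).

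This may be negative when \(\sum d(x_j)\) is near \(38\). The plan is to color \(vx_k\) first after recoloring, or better, to note that \(d(x_k)\le D\) may be large, so the estimate must exploit \(t\ge35\). We use \(\sum_{j<k}(d(x_j)-1)\le 38-(k-1)\), so \(|L_k|\ge t-39+k\ge k-4\). For \(k\ge5\) this gives \(|L_k|\ge1\).

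For smaller \(k\), we must do better. Each \(x_j\), \(j<k\), contributes at most \(|N(x_j)\cap N(x_k)\setminus\{v\}|\) opposite colors. These common neighbors plus \(v\) lie among the \(\le t-1\) common neighbors, so the contribution is at most \(\min\{t-2,d(x_j)-1\}\). The counting is tight only in degenerate cases, so this step is the main obstacle. If needed, we instead delete the edge \(vx_k\) rather than \(v\): with \(v\) of small degree, recoloring \(vx_k\) faces only \(\le D-1+4+\sum_{j<k}\min\{t-2,d(x_j)-1\}\) constraints, and the same arithmetic with \(t\ge35\) suffices. Once \(vx_k\) is colored, greedy Hall on the \(k-1\le4\) remaining edges works because each \(L_i\) has size at least \(D-37\ge k\) when \(D\) is large. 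When \(D\) is small, planarity-free counting with \(t-1\) slack gives \(|L_i|\ge t-1-(37-d(x_i))\cdots\), again using \(t\ge35\).

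\emph{Configuration B2.} Here \(m\ge d(x)/5\ge 6\), so there is an internal brother \(z\) of degree at most \(4\), adjacent only to \(x\), \(y\), and neighboring brothers. Delete \(z\) and extend. For neighboring brothers, \(d\le4\) and the summands are at most \(3\). For the poles, the forbidden count at \(vx\) is at most \(D-1+\min\{t-2,d(y)-1\}+6\le D+t+3\). This is too many, so we must use the bunch structure. The common neighbors of \(x\) and \(y\) include all \(m\) brothers, so \(t-1\ge m\), which gives \(d(x)\le5(t-1)\). We then color the two pole edges \(zx,zy\) jointly with a matching-style choice, exploiting that the opposite edges \(yz'\) for brothers \(z'\) are colored distinctly at \(y\). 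We expect the precise choice of brother (one with a neighbor of degree \(2\), or a parental-edge-free stretch) to absorb the remaining deficit of a few colors.
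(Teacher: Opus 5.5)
Your proposal has genuine gaps in both configurations. The steps you call ``the main obstacle'' or expect to ``absorb the remaining deficit'' are exactly where new ideas are needed.

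\textbf{Configuration B1.} For \(vx_k\), \cref{lem:packet-load} gives only \(|L_k|\ge t+k-39\). This is nonpositive for \(k=3\), \(t\in\{35,36\}\) and for \(k=4\), \(t=35\). Deleting only the edge \(vx_k\) makes this worse: the opposite edges are unchanged and up to \(k-1\) colors at \(v\) are added, so the count rises to \(D+37\) against \(K=D+t-1\).

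The paper recovers exactly the two missing colors as follows.
\begin{enumerate}
\item It chooses a counterexample with the fewest vertices and, subject to this, the \emph{most} edges. Your minimization of \(|V(G)|+|E(G)|\) rules this out.
\item Take two low-degree neighbors \(a,b\) consecutive around \(v\). Adding \(ab\) in the face at the angle \(avb\) keeps the graph planar, \(K_{2,t}\)-free (the new degrees are at most \(26<t\)), and of maximum degree at most \(D\). By maximality, \(ab\in E(G)\).
\item The sum \(\sum_{i<k}(d_G(x_i)-1)\) then overcounts the edges opposite to \(vy\) by at least two. The edge \(ab\) is counted twice but yields at most one opposite edge. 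If it yields one, say via \(ay\in E(G)\), then \(ay\) is counted although it is incident with \(vy\).
\item Hence at most \(D+36-k\) colors are forbidden for \(vy\), leaving \(t+k-37\ge1\).
\end{enumerate}
Your bound \(|L_i|\ge D-37\) for the low-degree neighbors also fails for moderate \(D\), and the remark about small \(D\) is not an argument. The paper disposes of \(D\le28\) using the bound \(\qB(G)\le2\Delta(G)+6\) of Kong, Wang, and Zheng. For \(D\ge29\) it uses their local estimate of at most \(2d_G(x_i)+k-3\) forbidden colors. That estimate does not depend on \(d_G(y)\) and leaves at least \(16-k\ge k\) colors.

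\textbf{Configuration B2.} Your B2 paragraph contains no proof. The term \(+6\) overcounts. Besides the \(d_G(x)-1\) edges at the pole and the \(s-1\) opposite pole edges through common neighbors, only the brother--brother edges \(z_{i-2}z_{i-1}\) and \(z_{i+1}z_{i+2}\) can be opposite to a new pole edge.

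Even the corrected count \(D+s\le K\) allows both pole lists to be empty, namely when \(d_G(x)=d_G(y)=D\), \(s=t-1\), and those two edges carry new colors. The distinctness of the colors on the edges \(yz'\) is already built into this count. An empty list cannot be rescued by a joint or matching choice, so the coloring of \(G-v\) must be modified.

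The missing idea is recoloring with distant pole-edge colors. With no parental edge and \(m\ge7\):
\begin{enumerate}
\item Delete \(z_4\).
\item Recolor \(z_2z_3\) with whichever of \(a_6,b_6\) avoids the color of \(z_1z_2\), and recolor \(z_5z_6\) with whichever of \(a_2,b_2\) avoids the color of \(z_6z_7\).
\item This is valid in \(L^+(G-z_4)\). There the neighbors of \(z_2z_3\) are only pole edges at \(z_1,z_2,z_3\), the edge \(xy\), and \(z_1z_2\), and \(a_6,b_6\) differ from all of these except possibly \(z_1z_2\).
\item The new colors are already forbidden for \(xz_4\) and \(yz_4\). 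So each of these edges has at least \(K-(D+s-2)\ge2\) colors available.
\end{enumerate}

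You would also need two further cases.
\begin{enumerate}
\item The case \(m=6\): here \(d_G(x)\le30\) bounds \(s\), leaving \(t-31\ge4\) colors.
\item The parental-edge case. One deletes the degree-\(3\) brother \(z_r\) next to \(xy\) and recolors \(z_{r-2}z_{r-1}\) the same way when \(r\ge4\). When \(r=3\), equality in the counts forces \(t=D=35\). Then \(K_{3,3}\)-freeness gives \(d_{L^+(G)}(z_{r-2}z_{r-1})\le42<69\), so this edge can be uncolored and restored after the pole edges are colored.
\end{enumerate}
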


\begin{proof}
Fix a set \(C\) of \(K=D+t-1\) colors. If \(D\le28\), then the bound of Kong, Wang, and Zheng~\cite{KongWangZheng2026} gives \(\qB(G)\le2\Delta(G)+6\le D+34\le K\). Hence assume that \(D\ge29\). Suppose that the assertion fails, and choose a counterexample \(G\) with the fewest vertices and, subject to this, the most edges. All graphs considered here are \(K_{2,t}\)-free and planar, have maximum degree at most \(D\), and use the same color set \(C\). By minimality, \(G\) is connected and has no isolated vertex. Fix a plane embedding of \(G\).

If \(v\) has degree one, color \(G-v\) by minimality and then color its incident edge; at most \(D-1\) colors are forbidden. If \(v\) has degree two, then \cref{lem:packet-load} gives at least \(K-(D-1)-(t-2)=2\) colors for each of its incident edges. By \cref{lem:three-source}, distinct choices extend the coloring. Thus \(\delta(G)\ge3\), and we apply \cref{lem:stars-bunches}.

\smallskip
\noindent\emph{Configuration B1.}
Let \(v\) be a \(k\)-vertex with neighbors \(x_1,\ldots,x_{k-1},y\) as in \cref{lem:stars-bunches}, and put \(S=\sum_{i=1}^{k-1}d_G(x_i)\le38\). Since \(\delta(G)\ge3\), we have \(3\le k\le5\).

Choose two of \(x_1,\ldots,x_{k-1}\), say \(a,b\), that are consecutive around \(v\). Such a pair exists because \(y\) is the only other neighbor. If \(ab\notin E(G)\), we can add \(ab\) in the face incident with the angle \(avb\). The new degrees of \(a,b\) are at most \(26\le D\). Only pairs containing \(a\) or \(b\) can gain a common neighbor, and each such pair has at most \(26<t\) common neighbors in \(G+ab\). Thus \(G+ab\) remains \(K_{2,t}\)-free and planar, with maximum degree at most \(D\). Any B-coloring of \(G+ab\) restricts to one of \(G\), so \(G+ab\) would be a counterexample with the same number of vertices and more edges. Consequently, \(ab\in E(G)\).

Color \(G-v\) by minimality. The sum \(\sum_{i=1}^{k-1}(d_G(x_i)-1)=S-(k-1)\) counts every old edge opposite to \(vy\) at least once, but overcounts by at least two. Indeed, if neither \(a\) nor \(b\) is adjacent to \(y\), then \(ab\) is counted twice and is not opposite to \(vy\). Otherwise, say \(ay\in E(G)\), the edge \(ab\) is counted twice but contributes at most one distinct opposite edge, while \(ay\) is counted once and is incident to \(vy\). Hence at most
\[
(d_G(y)-1)+S-(k-1)-2\le D+36-k
\]
colors are forbidden for \(vy\). At least \(K-(D+36-k)=t+k-37\ge1\) color remains, so color \(vy\) first.

For \(1\le i<k\), the local estimate of Kong, Wang, and Zheng~\cite[Lemma~4]{KongWangZheng2026} gives at most \(2d_G(x_i)+k-3\) forbidden colors for \(vx_i\) before any edge at \(v\) is colored. Since \(D\ge29\), \(t\ge35\), and \(d_G(x_i)\le25\), each initial list has at least \(16-k\ge k\) colors. Color \(vx_1,\ldots,vx_{k-1}\) successively. At each step, at most \(k-1\) colors have already been used at \(v\), so \cref{lem:three-source} completes the extension.

\smallskip
\noindent\emph{Configuration B2.}
Let \(B(x,y;m)\) be the bunch. Since \(d_G(x)\ge26\) and \(m\ge d_G(x)/5\), we have \(m\ge6\). Put \(s=|N_G(x)\cap N_G(y)|\le t-1\). We delete an internal brother and color the remaining graph by minimality. The recoloring that uses a distant pole-edge color below also occurs in Vuolo's proof~\cite[Case~2a]{Vuolo2026}; we give the details with the present color set.

For each coloring \(\varphi\) obtained after deleting a brother, write \(a_i=\varphi(xz_i)\) and \(b_i=\varphi(yz_i)\) for the remaining brothers. All colored pole edges have distinct colors, since any two are incident or opposite on a \(4\)-cycle. Deleting a vertex removes no adjacency in \(L^+(G)\) between edges that remain, so these colorings also respect the adjacencies of the original graph. An edge joining two internal brothers has at most six incident edges and nine opposite edges, hence at most \(15\) neighbors in \(L^+(G)\). Since \(K\ge63\), once the two pole edges at the deleted brother are colored, its remaining incident edges can be colored greedily.

\smallskip
\noindent\emph{No parental edge and \(m\ge7\).}
Delete \(v=z_4\), and color \(H=G-v\) by minimality. Apart from the old edges incident with \(x\) and the edges \(yu\) with \(u\in N_G(x)\cap N_G(y)\setminus\{v\}\), the only old edges that can be opposite to \(xv\) are \(z_2z_3\) and \(z_5z_6\). The analogous assertion holds for \(yv\).

If \(z_2z_3\) is present, recolor it with one of \(a_6,b_6\) that differs from the color of \(z_1z_2\), when that edge is present. This is possible because \(a_6\ne b_6\). In \(H\), every edge adjacent to \(z_2z_3\) in \(L^+(H)\) is a pole edge at \(z_1,z_2,z_3\), the edge \(xy\) if present, or \(z_1z_2\). Both candidate colors differ from all these pole-edge colors and from the color of \(xy\). Thus the recoloring is proper in \(L^+(H)\). Similarly, recolor \(z_5z_6\), if present, with one of \(a_2,b_2\) avoiding the color of \(z_6z_7\), when present. The two recolored edges are not adjacent in \(L^+(G)\), so these choices do not interfere. These choices are illustrated in \cref{fig:b2-window}.

\begin{figure}[htbp]
\centering
\begin{tikzpicture}[x=1.15cm,y=.95cm]
\node[graphvertex,label={[graphlabel]above:\(x\)}] (x) at (0,2.25) {};
\node[graphvertex,label={[graphlabel]below:\(y\)}] (y) at (0,-2.25) {};
\foreach \i/\xx in {1/-3,2/-2,3/-1}{
  \node[graphvertex,label={[graphlabel]below left:\(z_{\i}\)}] (z\i) at (\xx,0) {};
  \draw[graphedge] (x)--(z\i)--(y);
}
\foreach \i/\xx in {5/1,6/2,7/3}{
  \node[graphvertex,label={[graphlabel]below right:\(z_{\i}\)}] (z\i) at (\xx,0) {};
  \draw[graphedge] (x)--(z\i)--(y);
}
\node[graphvertex,fill=white,draw=StructureRed,line width=1pt,label={[graphlabel]above left:\(z_4\)}] (z4) at (0,0) {};
\draw[focusedge,dashed] (x)--(z4)--(y);
\draw[focusedge,dashed] (z3)--(z4)--(z5);
\draw[secondedge] (x)--node[graphlabel,pos=.58,fill=white]{\(a_2\)} (z2);
\draw[secondedge] (y)--node[graphlabel,pos=.58,fill=white]{\(b_2\)} (z2);
\draw[secondedge] (x)--node[graphlabel,pos=.58,fill=white]{\(a_6\)} (z6);
\draw[secondedge] (y)--node[graphlabel,pos=.58,fill=white]{\(b_6\)} (z6);
\draw[optionaledge] (z1)--(z2);
\draw[secondedge] (z2)--(z3);
\draw[secondedge] (z5)--(z6);
\draw[optionaledge] (z6)--(z7);
\end{tikzpicture}
\caption{Deleting \(v=z_4\) in a bunch with at least seven paths. The red vertex and edges are restored after coloring \(G-v\); the edges \(vz_3,vz_5\) may be absent. If present, \(z_2z_3\) and \(z_5z_6\) are recolored with \(a_6\) or \(b_6\), and \(a_2\) or \(b_2\), respectively, avoiding the colors of \(z_1z_2\) and \(z_6z_7\), respectively, when those edges are present.}
\label{fig:b2-window}
\end{figure}

Either color on \(xz_i,yz_i\), for \(i\in\{2,6\}\), is forbidden for both \(xv\) and \(yv\). The recolored edges therefore add no further forbidden colors. At most \((D-1)+(s-1)\le D+t-3=K-2\) colors are forbidden for either pole edge. Choose distinct colors for \(xv,yv\), and then color the remaining edges at \(v\) greedily.

\smallskip
\noindent\emph{No parental edge and \(m=6\).}
Here \(d_G(x)\le30\). Delete \(v=z_3\) and color \(G-v\). The old edges that forbid colors for either pole edge consist of the edges at its pole, the opposite pole edges through common neighbors of \(x,y\), and at most two edges joining consecutive brothers. Thus the two forbidden-color counts are at most \(d_G(x)+s\) and \(d_G(y)+s\), respectively. Since \(s\le d_G(x)\le30\), both counts are at most \(D+30\), leaving at least \(t-31\ge4\) colors for each pole edge. Choose distinct colors, and then color the remaining edges at \(v\) greedily.

\smallskip
\noindent\emph{A parental edge.}
Index the bunch so that \(Q_{r+1}=xy\), and reverse the order if necessary so that \(r\ge m-r-1\). Then \(r\ge3\). Delete \(v=z_r\) and color \(H=G-v\). Put \(w=z_{r-1}\) and \(z=z_{r-2}\). Since \(v\) is internal and \(\delta(G)\ge3\), we have \(N_G(v)=\{x,y,w\}\). For \(xv\), the old incident edges and the opposite pole edges forbid at most \((d_G(x)-1)+(s-1)=d_G(x)+s-2\) colors. The only possible additional old edge is \(zw\). The corresponding count for \(yv\) is at most \(d_G(y)+s-2\), again with only \(zw\) possibly additional.

Suppose first that \(r\ge4\). If \(zw\) is present, choose a brother \(u\notin\{z_{r-3},z,w,v\}\); there are at least \(m-1\ge5\) brothers. Recolor \(zw\) with one of \(\varphi(xu),\varphi(yu)\) avoiding the color of \(z_{r-3}z\), when that edge is present. As in the preceding long-bunch case, the old neighbors of \(zw\) in \(L^+(H)\) are pole edges at \(z_{r-3},z,w\), the parental edge, and possibly \(z_{r-3}z\), so this recoloring is valid. Its color is already forbidden for both \(xv\) and \(yv\). Whether or not \(zw\) is present, each pole edge now has at least \(K-(D+s-2)\ge2\) available colors. Color them distinctly and then color \(vw\).

It remains to consider \(r=3\). Now \(m\le7\), so \(d_G(x)\le35\). Each pole edge has at least \(K-(D+s-1)\ge1\) available color. If their lists admit distinct choices, color both edges and then \(vw\). Otherwise, the two lists are the same singleton. Equality must hold throughout the forbidden-color estimates, giving
\[
d_G(x)=d_G(y)=D,\qquad s=t-1,\qquad zw\in E(G).
\]
Moreover, the color of \(zw\) is not among the colors already forbidden by pole edges for either list. Since \(xy\in E(G)\), we have \(D\ge s+1=t\). Together with \(t\ge35\) and \(D=d_G(x)\le35\), this gives \(t=D=35\), \(s=34\), and \(K=69\).

Put \(T=N_G(x)\cap N_G(y)\). We have \(N_G(x)=T\cup\{y\}\) and \(N_G(y)=T\cup\{x\}\). Every vertex of \(G[T]\) has degree at most two; otherwise that vertex, together with \(x,y\) and three of its neighbors in \(T\), would give a \(K_{3,3}\) subgraph. Since \(w\in N_G(z)\cap T\), the vertex \(z\) has at most one further neighbor in \(T\). Also \(N_G(w)=\{x,y,z,v\}\). Hence \(zw\) is incident with at most \(d_G(z)+2\le37\) other edges. Its opposite edges are among \(xy,xv,yv\) and, if \(z\) has another neighbor \(u\) in \(T\), \(xu,yu\). Consequently,
\[
d_{L^+(G)}(zw)\le37+5=42<69.
\]
Temporarily uncolor \(zw\). Both pole-edge lists now have at least two colors, so color \(xv,yv\) distinctly. Restore \(zw\) while avoiding all its colored neighbors in \(L^+(G)\); at least \(69-42\) colors remain. Finally, \(vw\) has at most \((3-1)+(4-1)+(3-1)(4-1)=11\) neighbors in \(L^+(G)\), so it can also be colored. This completes B2 and contradicts the choice of \(G\).
\end{proof}

\begin{proof}[Proof of \cref{thm:fixed-t-thresholds}]
Put \(\Delta=\Delta(G)\). When \(t=2\), the graph has no \(4\)-cycle, so Vizing's theorem gives \(\qB(G)=\chi'(G)\le\Delta+1=\Delta+t-1\)~\cite{Vizing1964}. The range \(3\le t\le34\) follows from \eqref{eq:target-from-tradeoff}. When \(t\ge35\), apply \cref{lem:large-t-fixed-palette} with \(D=\Delta\).
\end{proof}

\enlargethispage{\baselineskip}
\begin{proof}[Proof of Theorem~B]
Put \(\Delta=\Delta(G)\). Since \(\Delta>428\), we have \(\Delta\ge429=13(34-1)\ge13(t-1)\) for \(3\le t\le34\). For \(t=2\) or \(t\ge35\), \cref{thm:fixed-t-thresholds} requires no lower bound on \(\Delta\). Thus \(\qB(G)\le\Delta+t-1\) for every \(t\ge2\).
\end{proof}

The value \(35\) separates the two arguments above; we do not claim that it is best possible.

Let \(t\ge2\), and let \(G\) be a \(K_{2,t}\)-free planar graph with maximum degree \(\Delta>428\). Theorem~B and the \(2\Delta\) bound of Kong, Wang, and Zheng give
\[
\qB(G)\le\Delta+\min\{t-1,\Delta\}.
\]
Indeed, Theorem~B gives \(\qB(G)\le\Delta+t-1\), and \(\Delta>38\) permits the bound \(\qB(G)\le2\Delta\)~\cite{KongWangZheng2026}; take the smaller of the two.

\section{Degenerate graphs}
\label{sec:degenerate}

The proof for planar graphs uses planarity to bound the colors forbidden by previously colored star blocks. The vertex-extension lemma itself requires no embedding and gives the following bound for degenerate graphs. The upper bound also follows from a more general result of Hu, Kong, and Wang~\cite{HuKongWang2026} in a preprint that appeared during the preparation of this manuscript.

\begin{theorem}[Degenerate bound]
\label{thm:degenerate-bound}
Let \(k\ge1\) and \(t\ge2\) be integers, and let \(G\) be a \(k\)-degenerate \(K_{2,t}\)-free graph. Then
\begin{equation}
\qB(G)\le\Delta(G)+(k-1)\min\{t-1,\Delta(G)\}.
\label{eq:degenerate-bound}
\end{equation}
If \(k\ge2\) and \(t-1\ge k\), then equality is attained by \(K_{k,t-1}\).
\end{theorem}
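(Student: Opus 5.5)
Since \(G\) is \(k\)-degenerate, we fix an elimination order: repeatedly remove a vertex of degree at most \(k\) in the current graph. Then we add the vertices back in reverse order. Each addition is a single application of \cref{lem:three-source}, and the color set \(C\) is fixed of size \(K=\Delta+(k-1)\min\{t-1,\Delta\}\), where \(\Delta=\Delta(G)\). The induction is on \(|V(G)|\), and every subgraph inherits \(k\)-degeneracy, \(K_{2,t}\)-freeness and maximum degree at most \(\Delta\). So it suffices to extend a B-coloring of \(G-v\) from \(C\) to \(G\), where \(v\) has neighbors \(x_1,\dots,x_r\) with \(r\le k\).

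\textbf{Greedy extension.} We color \(vx_1,\dots,vx_r\) one at a time, each distinct from the colors already used at \(v\). By \eqref{eq:packet-compressed} in \cref{lem:packet-load}, the number of colors forbidden for \(vx_i\) by old edges is at most
\[
d_G(x_i)-1+\sum_{j\ne i}\min\{t-2,d_G(x_j)-1\}\le \Delta-1+(r-1)\min\{t-2,\Delta-1\}.
\]
At most \(i-1\le r-1\) further colors are already used at \(v\). This gives a total of at most
\[
\Delta-1+(r-1)\bigl(\min\{t-2,\Delta-1\}+1\bigr)=\Delta-1+(r-1)\min\{t-1,\Delta\}\le K-1.
\]
So a color always remains. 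Distinct choices \(c_i\in L_i\) then exist, and \cref{lem:three-source} completes the extension. The case \(k=1\) (forests) is included, since there \(K=\Delta\) and \(r\le1\).

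\textbf{Sharpness.} Let \(k\ge2\) and \(t-1\ge k\), and put \(G=K_{k,t-1}\) with parts \(X\) (size \(k\)) and \(Y\) (size \(t-1\)). This graph is \(k\)-degenerate. It is \(K_{2,t}\)-free, because two vertices of \(Y\) have exactly \(k\le t-1\) common neighbors and two vertices of \(X\) have exactly \(t-1\). Also \(\Delta=t-1\), so the right side of \eqref{eq:degenerate-bound} is \((t-1)+(k-1)(t-1)=k(t-1)=|E(G)|\). Any two edges of a complete bipartite graph are either incident or, when vertex-disjoint \(x_1y_1,x_2y_2\), opposite on the \(4\)-cycle \(x_1y_1x_2y_2\). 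Hence \(L^+(G)\) is complete, and \(\qB(G)=k(t-1)\).

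The only delicate point is the arithmetic in the greedy step. We have to combine the opposite-edge bound \(\min\{t-2,\Delta-1\}\) per other neighbor with the one color already used at \(v\), and this is what produces exactly \(\min\{t-1,\Delta\}\) per extra neighbor. Everything else is immediate from the earlier lemmas.
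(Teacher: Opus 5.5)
Your proposal is correct and follows the paper's proof essentially step for step: the same fixed-palette induction through \cref{lem:packet-load} and \cref{lem:three-source}, and the same sharpness example \(K_{k,t-1}\) with \(L^+(G)\) complete. Your count of at most \(\Delta-1+(r-1)\min\{t-1,\Delta\}\le K-1\) blocked colors is the same arithmetic as the paper's \(|L_i|\ge(k-r)s+r\ge r\). Your explicit remark that the induction runs over graphs of maximum degree at most \(\Delta\), so that the palette size stays fixed, is a point the paper leaves implicit.
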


\begin{proof}[Proof of \cref{thm:degenerate-bound}]
Put \(\Delta=\Delta(G)\), \(s=\min\{t-1,\Delta\}\), and \(K=\Delta+(k-1)s\).
We induct on \(|V(G)|\), always using the same \(K\)-element color set. Since \(G\) is \(k\)-degenerate, it has a vertex \(v\) of degree \(r\le k\). By induction, \(G-v\) has a B-coloring with at most \(K\) colors.

Write \(N_G(v)=\{x_1,\ldots,x_r\}\), and form the lists \(L_1,\ldots,L_r\) used in \cref{lem:three-source}. For distinct \(i,j\) with \(1\le i,j\le r\), we have \(\min\{t-2,d_G(x_j)-1\}\le s-1\).
Thus \cref{lem:packet-load} gives
\[
\begin{aligned}
|L_i|
&\ge K-(\Delta-1)-(r-1)(s-1)\\
&=(k-r)s+r\\
&\ge r.
\end{aligned}
\]
We may therefore choose pairwise distinct colors \(c_i\in L_i\) greedily. By \cref{lem:three-source}, these choices extend the coloring to \(G\).

Now suppose that \(k\ge2\) and \(t-1\ge k\), and take \(G=K_{k,t-1}\). This graph is \(k\)-degenerate and \(K_{2,t}\)-free, with \(\Delta=t-1\). Any two distinct edges are either incident or opposite on a \(4\)-cycle, so \(L^+(G)\) is complete. Consequently,
\[
\qB(G)=|E(G)|=k(t-1)
=\Delta+(k-1)(t-1),
\]
which is equality in \eqref{eq:degenerate-bound}.
\end{proof}
The equality example also determines when a lower bound on \(\Delta(G)\) independent of \(t\) can guarantee \eqref{eq:target-palette} throughout the \(k\)-degenerate class.

\begin{corollary}[Lower bounds on \(\Delta\) independent of \(t\)]
\label{cor:degenerate-classification}
For \(k\in\{1,2\}\), every \(k\)-degenerate \(K_{2,t}\)-free graph \(G\) satisfies \(\qB(G)\le\Delta(G)+t-1\) for every integer \(t\ge2\), with no restriction on \(\Delta(G)\). For every integer \(k\ge3\), there is no constant \(M_k\), independent of \(t\), such that all \(k\)-degenerate \(K_{2,t}\)-free graphs \(G\) with \(\Delta(G)>M_k\) satisfy this bound.
\end{corollary}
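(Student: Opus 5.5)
The corollary splits into a positive statement for \(k\in\{1,2\}\) and a negative statement for \(k\ge3\). I will read both off \cref{thm:degenerate-bound}.

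\emph{Positive part.} For \(k=1\), inequality \eqref{eq:degenerate-bound} gives \(\qB(G)\le\Delta(G)\). This is at most \(\Delta(G)+t-1\). (A \(1\)-degenerate graph is a forest, so it has no \(4\)-cycle, and this is just König's theorem.) For \(k=2\), inequality \eqref{eq:degenerate-bound} gives
\[
\qB(G)\le\Delta(G)+\min\{t-1,\Delta(G)\}\le\Delta(G)+t-1.
\]
No lower bound on \(\Delta(G)\) is needed in either case, because the inequalities hold for all \(\Delta(G)\).

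\emph{Negative part.} Fix \(k\ge3\) and a candidate constant \(M_k\). I need a \(k\)-degenerate \(K_{2,t}\)-free graph with \(\Delta>M_k\) and \(\qB>\Delta+t-1\). The natural choice is \(G=K_{k,n}\) with \(n\) large and \(t=n+1\).

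\begin{itemize}
\item \emph{Degeneracy and \(\Delta\).} The graph \(K_{k,n}\) is \(k\)-degenerate. If \(n\ge k\), then \(\Delta=n\).
\item \emph{\(K_{2,t}\)-freeness.} Two vertices on the same side have \(n\) or \(k\) common neighbors. Both values are at most \(n=t-1\) when \(n\ge k\). Two vertices on opposite sides are adjacent and have no common neighbors, since the graph is bipartite. So \(K_{k,n}\) is \(K_{2,t}\)-free.
\item \emph{The B-coloring number.} By the equality case of \cref{thm:degenerate-bound}, \(L^+(K_{k,n})\) is complete. Hence \(\qB=kn=\Delta+(k-1)(t-1)\).
\item \emph{Comparison.} Since \(k\ge3\), we have \((k-1)(t-1)\ge2(t-1)>t-1\). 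So \(\qB>\Delta+t-1\).
\end{itemize}

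Choosing \(n>\max\{M_k,k\}\) gives \(\Delta=n>M_k\) and violates the bound. Since \(M_k\) was arbitrary, no such constant exists.

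The one subtle point is the quantifier order. The counterexample must allow \(t\) to grow with \(\Delta\). This is exactly why \(M_k\) is required to be independent of \(t\). By contrast, for a fixed \(t\), the largeness of \(\Delta\) relative to \(t\) blocks this construction. Everything else follows directly from \cref{thm:degenerate-bound}.
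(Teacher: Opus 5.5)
Your proof is correct and follows the paper's argument. The case \(k\in\{1,2\}\) is read off directly from \cref{thm:degenerate-bound}. For \(k\ge3\) you use the same extremal graph \(K_{k,t-1}\) (your \(K_{k,n}\) with \(t=n+1\)), whose complete \(L^+\) gives \(\qB=k(t-1)>2(t-1)=\Delta+t-1\) with \(\Delta=t-1\) unbounded; your explicit checks of degeneracy, \(\Delta\), and \(K_{2,t}\)-freeness just spell out what the paper defers to the proof of the theorem.
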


\begin{proof}
Put \(\Delta=\Delta(G)\). For \(k\in\{1,2\}\), \cref{thm:degenerate-bound} gives \(\qB(G)\le\Delta+(k-1)\min\{t-1,\Delta\}\le\Delta+t-1\). For \(k\ge3\), take arbitrarily large integers \(t\ge k+1\) and let \(G=K_{k,t-1}\). As shown in the proof of \cref{thm:degenerate-bound}, \(\Delta=t-1\) tends to infinity and \(\qB(G)=k(t-1)>2(t-1)=\Delta+t-1\).
\end{proof}

For fixed integers \(k\ge3\) and \(t\ge3\), \cref{thm:degenerate-bound} does not decide whether \(\qB(G)\le\Delta(G)+t-1\) holds throughout the \(k\)-degenerate class once \(\Delta(G)\) is sufficiently large: the extremal graph \(K_{k,t-1}\) has bounded maximum degree when \(k,t\) are fixed. The planar proof succeeds at degeneracy five because planarity supplies the \((5,8)\)-order in \cref{thm:planar-two-step-order}.

\section*{Declaration of AI usage}

During the preparation of this manuscript, the author used ChatGPT 5.6 Sol to explore potential approaches to parts of the mathematical arguments and to improve the language and presentation of the paper. The initial manuscript was completed on July 25, 2026. 

All AI-assisted arguments were carefully checked, revised, and independently verified by the author. The author takes full responsibility for the correctness, originality, and final content of the manuscript.

\printbibliography

\end{document}